\def\i#1{i^{(#1)}}
\def\j#1{j^{(#1)}}
\def\k#1{k^{(#1)}}
\def\d#1{d^{(#1)}}
\newtheorem{lem}{Lemma}[section]
\newtheorem{rem}[lem]{Remark}
\newtheorem{prop}[lem]{Proposition}
\newtheorem{lemma}[lem]{Lemma}
\newtheorem{theorem}[lem]{Theorem}
\newtheorem{cor}[lem]{Corollary}
\newtheorem{defn}[lem]{Definition}
\newtheorem{ex}[lem]{Example}
\title{ Spectral sequences of operad algebras }
\author{K. Bauer}
\address{Department of Mathematics, University of Calgary, Calgary, AB, Canada}
\email{kristine@math.ucalgary.ca}
\author{L. Scull}
\address{Department of Mathematics, Fort Lewis College,  
Durango, Colorado,  
USA
}
\email{scull\_l@fortlewis.edu}
\begin{document}

\newtheorem{example}{Example}[section]

\newcommand{\Lie}{{\rm Lie}}
\newcommand{\J}{{\bf J}}
\newcommand{\oh}{{\bf o}}
\newcommand{\one}{{\bf 1}}
\newcommand{\R}{{\mathbb R}}                 
\newcommand{\w}{{\bf v}}                                    
\newcommand{\ww}{{\bf w}}
\newcommand{\x}{{\bf x}}
\renewcommand{\smash}{\wedge}

\newcommand{\C}{ C_{*,*}}
\newcommand{\A}{ A_{*, *}}
\newcommand{\s}{\mathcal S}

\begin{abstract} 
We identify conditions under which it is guaranteed that an action of an operad  on the $E^2$ page of a spectral sequence passes to $E^r$ for $r\ge 2$ and hence to the  $E^\infty$ page.   We consider this question in both the purely algebraic and topological settings.  \end{abstract}

\maketitle

\section{Introduction}

The existence of a product structure can greatly aid in spectral sequence computations, and incorporating this structure into a spectral sequence is a standard tool.
  What makes this possible is that it is well-understood when a spectral sequence is compatible with the algebraic structure present -- when a product structure on an $E^2$ page of a spectral sequence will result in a product on the $E^\infty$ page.  
This was done, for example, by Massey \cite{Mas}, who defined  an infinite sequence of conditions which need to be satisfied, one for each page of the spectral sequence.  However, Massey's conditions are not easy to work with, as each condition needs to be verified seperately.  Other  conditions are summarized by McCleary  in \cite{McC} and presented nicely for topological spectral sequences by Dugger in  \cite{D}.  In these cases, inductive conditions are established which ensure that each differential of the spectral sequence will satisfy a Leibniz rule, ensuring that the multiplicative structure persists from one page to the next.  

In this paper, we consider algebraic operations described more generally by operads.  An operad is designed to encode a set of operations and their composition laws.  This gives a framework for considering many different kinds of algebraic structures from a common viewpoint, as actions of various operads.   More general constructions, such as PROPs, are also considered in section 3.   Operads and spectral sequences have both become ubiquitous in algebra and topology.   We present conditions under which an action of an operad
  on the $E^2$ page of a spectral sequence passes to the  the $E^\infty$ page.  These conditions generalize the conditions for algebras, and open the door for new applications.  It seems to be a folk theorem that spectral sequences exist for operad algebras (see, e.g. Section 1.4 of \cite{AK}).  However, no explicit conditions for such appear in the literature.  Our goal is to fill this gap.

 In this paper, we explore three circumstances in which a spectral sequence can arise from operad algebras.  The first situation is purely algebraic.  In Section \ref{s:algebra} we consider an exact couple arising from a short exact sequence of chain complexes, each of which is an algebra over a differential graded operad.  The other two situations are topological.  In Section \ref{s:spaces}, we consider exact couples arising from a tower of spaces.  In this case, the entire tower is an algebra over the operad.  Finally, in Section \ref{s:spectra} we explore the stable setting.  
 
The authors would like to acknowledge the debt owed to the work of Dan Dugger \cite{D} on product structures in spectral sequences.  We have used the methods in that paper, as applied to operad algebras, throughout our work.  We would also like to thank the referee for extensive suggestions and improvements.

\section{Preliminaries}

In this section, we briefly  review the definitions of operads and operad algebras, and set up the context in which we will be working.    Two good references for this material are \cite{F} and \cite{M}.

For concreteness, we will consider operads and algebras in one of two underlying categories {\sf C}.  In algebraic settings,   ${\sf C}={\sf dg-Mod}_k$, the category  of differential graded $k$-modules (shortened to dg modules), where $k$ is a commutative  ground ring.  For topological considerations, we use   ${\sf C}= {\sf Tops}$, the category of basepointed topological spaces.  Many of the results discussed here can be extended without difficulty to an arbitrary symmetric monoidal category.  

We will denote a dg module by $(A,d)$ where $d$ is the differential of the module $A$, or just by $A$.  The tensor product $\otimes$ will denote $\otimes_k$, unless otherwise specified.  The degree of an element $u$ of a differential graded module is  denoted $|u|$.  

\subsection{Operads}    An operad is designed to encode a system of algebraic  operations and their compositions.  In our category {\sf C}, we have an object (which is either a dg module or a space) of  $n$-ary operations for each $n$.   One way to encode this information is via a symmetric sequence.  In the category ${\sf dg-Mod}_k$ this is   a sequence $\{(P(n), \delta_n)\}_{n\geq 1}$ of dg-modules $P(n)$ with differential $\delta_n$ such that each $P(n)$ is equipped with an action of the $n$-th symmetric group $\Sigma_n$ (which we can think of as changing the order on the input variables).   For simplicity, we will denote the differential of $P(n)$ by $\delta$ when the context makes it clear which $n$ is involved.  In $\sf Tops$ a symmetric sequence would simply be a sequence of spaces $X_n$ equipped  with an action of $\Sigma_n$.    An operad in {\sf C} is a symmetric sequence which is a monoid with respect to the product:  \[(P\circ P)(n)= \bigoplus_{k\geq 1} P(k)\otimes \Big( \bigoplus P(j_1)\otimes \cdots \otimes P(j_k) \Big) \]
where the second direct sum is taken over all ways of writing \\ $n=j_1 + \cdots + j_k$.
That is,  $P$ is an operad if there exists a map \[\gamma: P\circ P\to P\] which is unital, associative and equivariant (see for example \cite{F}, \cite{M}).  Since we think of this map as  the composition of operations,  it is convenient to denote the image of \[(p, q_1, \ldots , q_k) \in P(k)\otimes P(j_1)\otimes \cdots \otimes P(j_k)\] by $p(q_1, \ldots , q_k)\in P(j_1+\cdots + j_k)=P(n)$.  

 Note that when ${\sf C}={\sf dg-Mod}_k$, this is a map of dg modules.  Therefore,  it must also satisfy the derivation relation
\[ \delta(p(q_1, \ldots, q_k))=\delta(p)(q_1, \ldots , q_k) + \sum_{i=1}^k \pm (q_1, \ldots, \delta(q_i), \ldots , q_k)\]
where  the sign in the sum is given by $(-1)^{|p|+|q_1|+\cdots +|q_{i-1}|}$ for each $i$.  This property is also called the Leibniz rule.  On first reading, the reader may want to simplify to the case   where each $P(n)$ is concentrated in degree 0, so that $P$ is just an operad of $k$-modules.  

\subsection{Operad algebras} 
An algebra over the operad $P$ is an object which has the operations described by $P$.  Precisely, it is an object $A$ of ${\sf C}$ equipped with maps $\Gamma:P(k)\otimes A^{\otimes k}\to A$ satisfying the usual associativity and equivariance axioms (see e.g. Definition \ref{d:toweraction}, or \cite{M}).  We denote the image of $(p, x_1, \ldots, x_k)$ under this map by $p(x_1, \ldots , x_k)$.  When ${\sf C}={\sf dg-Mod}_k$, $p(x_1, \ldots , x_k)$ has degree $|x_1|+\cdots +|x_k|$.    Again, the structure maps are maps of dg modules and so satisfy a derivation relation:
\[ d(p(x_1, \ldots , x_k))= \delta(p)(x_1, \ldots, x_k) + \sum_{i=1}^k \pm p(x_1,\ldots , d(x_i) ,\ldots , x_k)\]\label{e:derivation}%
with  sign determined as before: for the $i$-th term, it is   $(-1)^{|p|+|x_1|+\cdots +|x_{i-1}|}$.
In addition, the operad maps are in particular morphisms of $k$-modules, and so we have the following additive properties:    \[ p(x_1, \ldots, x_i+y, \ldots  , x_n)=p(x_1, \ldots, x_i, \ldots , x_n)+p(x_1,\ldots ,y, \ldots, x_n)\]  and \[(p+q)(x_1, \ldots , x_n) = p(x_1, \ldots , x_n) + q(x_1, \ldots , x_n).\]
 
Under these assumptions, operad actions are compatible with the differential structure, as the next result shows.     

\begin{lemma} \label{l:action1} Let $(A, d)$ be an algebra over the operad $(P, \delta)$.  Then the homology $H(A, d)$ is an algebra over the homology operad $H(P, \delta)$.\end{lemma}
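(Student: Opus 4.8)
The plan is to define the action at the level of representatives and then verify that it is well defined and respects the operad algebra axioms. Recall first that $H(P,\delta)$ is itself an operad: its structure map is the composite of the Künneth (cross product) map
\[ H(P(k))\otimes H(P(j_1))\otimes\cdots\otimes H(P(j_k)) \longrightarrow H\big(P(k)\otimes P(j_1)\otimes\cdots\otimes P(j_k)\big) \]
with the map induced on homology by $\gamma$, and unitality, associativity and equivariance are inherited from $P$. Given cycles $p\in P(k)$ and $x_1,\dots,x_k\in A$ representing classes $[p]\in H(P(k))$ and $[x_i]\in H(A)$, I would set
\[ [p]([x_1],\dots,[x_k]) := \big[\,p(x_1,\dots,x_k)\,\big]. \]

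The first point to check is that $p(x_1,\dots,x_k)$ is again a cycle. This is immediate from the derivation (Leibniz) relation for the structure maps $\Gamma$: since $\delta(p)=0$ and $d(x_i)=0$ for every $i$, every term on the right-hand side of the expansion of $d\big(p(x_1,\dots,x_k)\big)$ vanishes. Next I would check independence of the chosen representatives. Replacing $p$ by $p+\delta(q)$ changes $p(x_1,\dots,x_k)$ by $\delta(q)(x_1,\dots,x_k)$, using the additivity of the structure maps; applying the derivation relation to $q(x_1,\dots,x_k)$ and using that the $x_i$ are cycles identifies this difference with $\pm\, d\big(q(x_1,\dots,x_k)\big)$, a boundary. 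Likewise, replacing $x_1$ by $x_1+d(y)$ changes the value by $p(d(y),x_2,\dots,x_k)$, and the derivation relation applied to $p(y,x_2,\dots,x_k)$—in which the $\delta(p)$ term and the other $d(x_i)$ terms vanish—exhibits this difference as $\pm\, d\big(p(y,x_2,\dots,x_k)\big)$; the remaining slots are handled in the same way. Hence the formula descends to a well-defined map $H(P(k))\otimes H(A)^{\otimes k}\to H(A)$.

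Finally, associativity and equivariance of this action follow from the corresponding axioms for $\Gamma$ on $A$, together with compatibility of the Künneth map with iterated composition: both sides of each axiom are represented by the image of cycles under the same iterated chain-level structure maps, so the required identities already hold before passing to homology. I expect the only genuine bookkeeping to occur in the well-definedness step, namely tracking the Koszul signs produced by the derivation relation; since these signs never affect whether a chain is a boundary, they pose no obstruction, but they should be recorded carefully. The use of the Künneth map also means that, over a general ground ring $k$, one should invoke its naturality, which is standard.
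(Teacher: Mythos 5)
Your proposal is correct and follows essentially the same route as the paper: define $[p]([x_1],\ldots,[x_k])=[p(x_1,\ldots,x_k)]$ on representatives and use the derivation (Leibniz) relation together with additivity of the structure maps to see that the result is a cycle and is independent of the chosen representatives. You simply spell out the cycle check, the boundary bookkeeping, and the Künneth-induced operad structure on $H(P,\delta)$ in more detail than the paper's terse argument.
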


\begin{proof}  Let $[p]$ be a homology class of  $H(P, \delta)$,  with representative  $p$  in $P(k)$.  Let $[x_i]$ be homology classes of $H(A,d)$ for $1\leq i \leq k$.  Then we define $[p]([x_1], \ldots , [x_k])=[p(x_1, \ldots , x_k)]$ where $x_i$ is a representative in $A$ for each $[x_i]$.  The additive properties of the action mentioned above can easily be used to show that  varying the representatives of  $p$ or the $x_i$'s by a boundary results in the same homology class  $[p(x_1 , \ldots , x_k)]$.

\end{proof}

\subsection{Examples} 
The graded commutative associative operad $Comm(n)$ is created from equivalence classes of connected planar binary trees with one root and $n$ labelled leaves;   two such trees are equivalent if they are related by a sequence of adjacent moves on the leaves (corresponding to switching brackets using associativity) or by switching the order of adjacent leaves with appropriate sign (corresponding to graded commutativity).   An algebra over this operad is exactly a graded commutative associative algebra  

Many other algebraic structures can be encoded by similar operads;   for example, associative algebras are encoded by the operation of the operad $Assoc$ created in an analogous way to $Comm$ (but with a suitibly altered equivalence relation);  various other algebraic structures have their corresponding operads,  such as  Lie algebras encoded by the operad $Lie$.  Many examples of operads, their algebras and their uses can be found in \cite{MSS}.

\section{Spectral sequences in algebra} \label{s:algebra}
In this section we consider spectral sequences of dg modules which are also algebras over a dg operad $P$.  We give conditions which ensure that we produce a spectral sequence of operad algebras.
We examine operad algebras whose underlying structure is that of a bigraded module $A_{p,q}$ with differential $d_A:A_{p,q}\to A_{p, q-1}$.  In addition to satisfying the derivation relation from the previous section, therefore, we will also require the operad action to respect both gradings: if $x_i\in A_{p_i, q_i}$, $1\leq i\leq k$, then $p(x_1, \ldots , x_k)$ has degree $(p_1+\cdots + p_k, q_1+\cdots +q_k)$.

 Our approach to spectral sequences will be through exact couples which arise from certain short exact sequences of chain complexes.  An exact couple is a 5-tuple  $\langle E, D, i, j, k\rangle$:  \begin{equation}
 \xymatrix { D \ar[rr]^i && D \ar[dl]^j \\ & E \ar[ul]^k}
 \end{equation}\label{e: couple}%
 where $E$ and $D$ are $k$-modules (possibly graded or bigraded) with $k$-module maps between them,  and the diagram is exact at each corner.   If in addition $E$ is a dg operad algebra under the differential $jk$, we will call this an exact couple of operad algebras; then Lemma \ref{l:action1} will ensure that the derived couple inherits an operad action on $E'$ as well.   This does not automatically mean that the  derived couple will again be an exact couple of operad algebras;  we need additional conditions so that  the derived map $j'k'$ will again satisfy the Leibniz condition to be a map of dg operad algebras.   

One typical way in which an exact couple arises is from short exact sequences of chain complexes, and this is the setting we will be considering.    A short exact sequence of the form 
\begin{equation}
 \xymatrix{0 \ar[r] & A \ar[r]^i & A \ar[r]^j & C \ar[r] & 0 }\label{e:ses}
 \end{equation}
gives rise to a long exact sequence on homology which in turn becomes an exact couple with $D=H(A)$ and $E=H(C)$,  and the map $k$ given by the connecting homomorphism.       There are various choices of grading for this;  our conventions will be that the 
bigraded modules $A_{*,*}$ and $C_{*, *}$ have vertical differential $d_A$ and $d_C$ (that is, these are both maps of degree $(0,-1)$), the injection $i$ has degree $(1,0)$, and the surjection $j$ has degree $(0, 0)$.   This choice of grading leads to a homology spectral sequence, one can just as easily choose grading which produce a cohomological version.

We can introduce an operad action into this picture as follows.  Suppose that $(P, \delta)$ is an operad of of bigraded modules with one vertical differential and no horizontal differential, and that  $\A$ and $\C$ are operad algebras over $P$.  Moreover, suppose that the chain complex maps $i$ and $j$ in the short exact sequence are compatible with the operad action, in the following sense:  
\begin{itemize}
\item[(i)] The surjection $j$  satisfies $j(p(x_1, \ldots , x_k))=p(j(x_1), \ldots , j(x_k))$.
\item[(ii)] The injection $i$ satisfies $i(p(x_1, \ldots , x_k))=p(x_1, \ldots, i(x_h), \ldots , x_k)$ for any $0\leq h \leq k$
\end{itemize}

At first, the condition on the map $i$ may seem strange; it becomes more natural when we remember that the spectral sequence associated to this situation is graded by images of $i$. For example, if $A$ comes with a filtration $F_0 \subseteq F_1 \subseteq F_2 \dots \subseteq A $  and $i$ is an inclusion map then this condition is satisfied:  raising the filtration degree  of one of the inputs of the operation will raise the filtration degree of the result.   Another example where this condition is naturally satisfied comes from the Bockstein spectral sequence:  for $q$ a prime,  $A = \oplus_n Q$  is a coproduct of a $q$-torsion free dg operad algebra $Q$, and $i:  A \to A$ is the  ``multiply by  $q$'' map of algebras.  Again, we think of filtering by powers of $q$, and  $i$ adds one more power of $q$ regardless of where it is applied.


The main goal of this section is  the following,  
\begin{theorem} \label{t:main}   If a spectral sequence arises from a short exact sequence of chain  complexes $0 \to A \overset{i}{\to} A \overset{j}{\to} C \to 0$ which are $P$-algebras for an operad $P$, such that $i$ and $j$ satisfy properties (i) and (ii) as above, then the resulting spectral sequence inherits the operad algebra structure at all levels $E^r$.  

\end{theorem}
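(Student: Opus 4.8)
The plan is to induct on $r$, showing that if $E^r$ is a $P$-algebra with differential $d^r$ satisfying the operad Leibniz rule, then the same holds for $E^{r+1} = H(E^r, d^r)$. The base case $r = 2$ follows from Lemma~\ref{l:action1} applied to the exact couple: $E^2 = H(C, d_C)$ inherits a $P$-action from $\C$ (passing through the homology operad, which here is $P$ itself since $P$ has no horizontal differential, so $H(P,\delta)$ acts — one should note $\delta$ is the vertical differential and we are taking homology with respect to it consistently). The heart of the inductive step is to verify that $d^r$ obeys the Leibniz rule with respect to the $P$-action; once that is known, Lemma~\ref{l:action1} again hands us the $P$-action on $E^{r+1}$, and the only remaining task is to check that this action is again graded correctly and that $d^{r+1}$ continues to satisfy Leibniz.

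First I would set up explicit representatives following Dugger's formalism: an element of $E^r$ is represented by an element $c \in C$ (a $d_C$-cycle) such that $k(\bar c) = i^{r-1}(\bar a)$ for some $a \in A$, i.e.\ $c$ lifts to $A$ after applying the connecting map and then dividing by $i^{r-1}$. The differential $d^r[c]$ is then represented by $j(a)$. The key computation is: given $[c_1], \dots, [c_k]$ represented by $c_s$ with witnesses $a_s$, the element $p(c_1, \dots, c_k)$ represents $p([c_1], \dots, [c_k])$, and I must exhibit a witness for it so as to compute $d^r(p([c_1],\dots,[c_k]))$. Here is where properties (i) and (ii) do the work: using (i), $j$ commutes with $p$, and using (ii), $i$ applied to $p(\cdots)$ equals $p$ with $i$ inserted in any single slot. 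Iterating (ii), $i^{r-1}$ inserted in one slot equals $i^{r-1}$ of the whole; and applying the connecting homomorphism $k$ to $p(c_1, \dots, c_k)$ should, via compatibility of $k$ with the operad structure (which follows from (i) and (ii) together — $k$ is built from $j^{-1}$, $d$, and $i^{-1}$), produce $\sum_s \pm p(c_1, \dots, k(\bar c_s), \dots, c_k)$ at the chain level, so that a valid witness for $p(c_1, \dots, c_k)$ is $\sum_s \pm p(c_1, \dots, a_s, \dots, c_k)$ (using $k(\bar c_s) = i^{r-1}(\bar a_s)$ and property (ii) to absorb the $i^{r-1}$). Applying $j$ and using (i) once more yields $d^r(p([c_1], \dots, [c_k])) = \sum_s \pm p([c_1], \dots, d^r[c_s], \dots, [c_k])$, which is exactly the Leibniz rule with the sign dictated by the derivation relation from Section~2.

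The main obstacle I anticipate is bookkeeping rather than conceptual: tracking the signs so that they match the prescribed sign $(-1)^{|p| + |x_1| + \cdots + |x_{i-1}|}$ (with the bidegrees of the spectral sequence feeding into the degree), and being careful that the ``witness'' $a_s$ is only well-defined up to the ambiguities inherent in the exact couple ($\ker i^{r-1}$, boundaries, etc.), so that one must check the final formula is independent of all choices. The additive properties of the $P$-action recorded after the derivation relation, together with Lemma~\ref{l:action1}, handle exactly this kind of well-definedness, so I would invoke them to dispatch the indeterminacy. A secondary point to address is that $d^r$ has bidegree $(r, -1)$ in our conventions (coming from $i$ having degree $(1,0)$), and one must confirm the operad action on $E^r$ respects this bigrading — but this is immediate from the bigraded compatibility assumed of the action on $\A$ and $\C$ and the fact that $i$, $j$, $k$ have fixed bidegrees. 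Once the Leibniz rule for $d^r$ is in hand, Lemma~\ref{l:action1} closes the induction, and passing to $E^\infty$ is then standard.
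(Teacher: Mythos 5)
Your proposal follows essentially the same route as the paper: represent classes in $E^r$ by cycles in $C$ with witnesses in $A$, compute $k$ of $p(x_1,\ldots,x_k)$ by lifting through $j$ via property (i), use the vanishing of $\delta$ on homology plus the derivation relation, absorb $i^{-(r-1)}$ into a single slot via property (ii), apply $j^{(r)}=j\circ i^{-(r-1)}$ with (i) again, and close the induction with Lemma~\ref{l:action1}. The only quibble is notational: in expressions like $p(c_1,\ldots,a_s,\ldots,c_k)$ all slots must carry elements of $A$ (the lifts $a_t$ with $j(a_t)=c_t$), as in the paper's computation, but your argument clearly intends this and is otherwise the same.
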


Before presenting the proof of Theorem \ref{t:main}, we first recall a few details about the construction of exact couples.

\begin{rem} \label{r:cyclesboundaries}
{\em To pass from the short exact sequence of equation (\ref{e:ses})  to the (first) exact couple, we begin with  \[\langle E^{1}, D^{1} \rangle  =\langle H(C, d_C), H(A, d_A), i^{(1)}, j^{(1)}, k^{(1)}\rangle,\] where $i^{(1)}$, $j^{(1)}$ are the maps on homology induced by the maps $i$ and $j$ and $k^{(1)}$ is the connecting homomorphism.  This connecting homomorphism is defined by taking a pre-image of an element under $j$, applying $d$, and taking a pre-image under $i$;  abusing notation, we will write this as  $k = i^{-1}dj^{-1}$.  
This exact couple then gives rise to a spectral sequence  by successively defining the derived couples: $\langle E^2, D^2, \i{2}, \j{2}, \k{2} \rangle$ is given by  $E^2=H(E^1, jk)$ and $D^2=i(D)$, with    $\i{2}$ and $\k{2}$ induced by $i$ and $k$ with little change, while $\j{2}(a)=[j(i^{-1}(a))]$ (where again $i^{-1}(a)$ denotes a pre-image of $a$ under $i$).    Iterating this process results in a further derived couple for each $r>1$ and in general, only the derived maps $\j{r}$ change significantly: $\j{r}(a)=[j(i^{-(r-1)}(a))]$.   The kernel and image of $\d{r} = j^{(r)} k^{(r)} $ can be identified at each stage in terms of the initial maps $i$, $j$ and $k$, and we obtain:
\[ 0 \subseteq B^2 \subseteq 
\cdots \subseteq B^r \subseteq \cdots \subseteq Z^r \subseteq \cdots \subseteq Z^2 \subseteq  E^1 \]
where $Z^r=k^{-1}(i^{r-1}D)$ is the kernel of $j^{(r)}k^{(r)}$,  $B^r=j({\rm ker} (i^{r-1}))$ is the image, and  $E^r=Z^r/B^r$.}  
\end{rem}

We will  show that each derivation $\d{r} = \j{r}\k{r}$ satisfies the Leibniz condition.  An easy induction with Lemma \ref{l:action1} will show that each $\langle E^r, D^r \rangle $ is an exact couple of operad algebras.

\begin{proof}[Proof of Theorem \ref{t:main}] Let $p\in H(P(\ell))$ and let $[x_h]$, $1\leq h \leq \ell$, be elements of $E^{r}$.  Then each $[x_h]$ has a representative $x_h$ in $C$, since  $Z^r\subset E^1=H(C)$.  Choose $a_h\in A$ with $j(a_h)=x_h$ for each $h$.      We compute $k^{(r)}[p(x_1, \ldots , x_\ell)]$ by first computing $k(p(x_1, \ldots , x_\ell))$:
 \begin{flalign}  
k (p(x_1, \ldots , x_{\ell}) )
& = i^{-1}d j^{-1} (p(x_1, \ldots , x_{\ell})) \\ 
 & =  i^{-1} d (p(a_1, \dots, a_{\ell})) 
  \end{flalign}
using property (i) defining $j$.
Now $  H(P, \delta)$  has no higher differential, and we treat it as a chain complex with $0$ differential.  Thus, $d(p)=0$ and we have  
 \begin{flalign}
i^{-1}d(p(a_1, \ldots , a_l)]
& =   \sum_{h=1}^\ell \pm i^{-1} p(a_1, \ldots, d(a_h) , \ldots , a_{\ell}) \\ 
& =  \sum_{h=1}^\ell \pm p(a_1, \ldots, i^{-1} d(a_h) , \ldots , a_{\ell})\\
   & = \sum_{h=1}^\ell\pm p(a_1, \ldots , k(x_h), \ldots, a_{\ell}) 
   \end{flalign} 
   where the sign for the $i$-th summand is given by $(-1)^{|p|+|x_1|+\cdots +|x_{i-1}|}$.  Here we have used property (ii) defining the map $i$, and the definition of $k(x_h)$.  
   Using our computation of $k^{(r)}[p(x_1, \ldots , x_l)]$, we compute  $ j^{(r)}k^{(r)}[p(x_1, \ldots , x_h)]$ by remembering that $j^{(r)}[a]= [j(i^{-(r-1)}a)]$.  Thus $j\left(i^{-(r-1)}\sum_{h=1}^\ell\pm p(a_1, \ldots , k(x_h), \ldots, a_{\ell})\right) $ is equal to:
   \begin{flalign}
 \sum_{h=1}^\ell \pm j(p(a_1, \ldots, i^{-(r-1)} k(x_h), \ldots, a_{\ell}) \quad \rm{by\ property \ (ii)} \\
 = \sum_{h=1}^\ell \pm p(j(a_1), \ldots, j(i^{-(r-1)}k(x_h)), \ldots, j(a_{\ell}) \quad \rm {by\ property \ (i)} \end{flalign}
with the signs given as before.  Upon passing to homology, this shows that 
$\j{r}\k{r} [p(x_1, \ldots , x_{\ell})]$ is equal to 
\[ \sum_{h=1}^\ell \pm p([x_1], \ldots , \j{r}\k{r} [x_h] , \ldots , [x_{\ell}] )
\]
as desired.


This gives us exactly what we need to finish the proof that each derived exact couple $ \langle E^r, D^r, \i{r}, \j{r}, \k{r} \rangle$ in the spectral sequence is an exact couple of operad algebras.   We induct on $r$:  since the first derived couple $\langle E^2, D^2, \i{2}, \j{2}, \k{2} \rangle$ is defined by  $E^2=H(E^1, d)$ and $D^2=i(D)$, we see that $E^2$ inherits the operad algebra structure by Lemma \ref{l:action1} and $D^2$ by the fact that $i$ respects the operad action.  Moreover, we have shown that $d^{(2)} $ satisfies the Leibniz condition, the first derived couple is also an exact couple of operad actions.

Similarly, each exact couple is obtained from the previous by iterating this process.  So if we assume that  $ \langle E^r, D^r, \i{r}, \j{r}, \k{r} \rangle$ is an exact couple of operad actions,  Lemma \ref{l:action1} and the compatibility of $i$ ensures that the derived exact couple $ \langle E^{r+1}, D^{r+1}, \i{r+1}, \j{r+1}, \k{r+1} \rangle$ consists of operad algebras, and our previous check shows that the derived differential $j^{(r+1)}k^{(r+1)}$ continues to behave well and satisfy the necessary Leibniz condition.

Therefore every term in the spectral sequence $E^r$ will inherit the operad action.
\end{proof}

\subsection{Examples}

Specializing to commutative algebras via the $Comm$ operad, Theorem \ref{t:main}  exactly states that under the conditions we have just described the product structure on the $E^r$ term descends to the $E^{r+1}$-term, a standard result about algebraic structures.    Using another operad such as $Assoc$ or $Lie$ would give analogous results for the corresponding associative or Lie algebra structures.  

By taking the operad  approach, Theorem \ref{t:main} has identified conditions for a general   algebraic structure present  to be compatible with the spectral sequence, and we  do not have to consider each algebraic context seperately.

\subsection{Convergence}

As stated in the wonderful introduction to spectral sequences (\cite{McC}, page 5), a spectral sequence is used precisely to relate something computable to something desirable.  In this section, we aim to show that if we have a spectral sequence of operad algebras, then the ``something desirable" will also be an operad algebra.
 In the case of spectral sequences arising from short exact sequences of chain complexes $0\to A\to A\to C\to 0$ which we have been considering, the spectral sequence converges to a limit or colimit of the graded complexes $A_{p,q}$.  We will describe the situation for colimits; both cases are well studied in the literature and the interested reader might pursue \cite{W}, \cite{D} for further information.

In Remark \ref{r:cyclesboundaries}, we used the cycles $Z^r$ and boundaries $B^r$ to define $E^r = Z^r/B^r$.   A spectral sequence is thus determined by the nested sequence
\[ 0 \subseteq B^2 \subseteq 
\cdots \subseteq B^r \subseteq \cdots \subseteq Z^r \subseteq \cdots \subseteq Z^2 \subseteq  E^1. \]
This sequence determines a unique object
\[ E^\infty = \left(\underset{r}{\cap} Z^r\right) / \left( \underset{r}{\cup} B^r \right).\]
We say that the spectral sequence {\it converges} to some graded object $H_*$ if there is an isomorphism $E^\infty_{p,q} \cong H_{q}$.  (Here we are working in the category of modules, so it is guaranteed that ${\cap_r}Z^r$ and ${\cup_r} B^r$ exist.  In general, one may need to impose extra hypotheses.  See \cite{W}, p. 125.)    In this section, we show that $E^\infty$ is an operad algebra, identify a good candidate for $H_*$ for our spectral sequence, show that $H_*$ is also an operad algebra and finally show that the operad algebra structures on $E^\infty$ and $H_*$ must agree.

\begin{rem} {\em Most references using a different grading, so that the spectral sequence converges to $H_*$ if there is an isomorphism $E^\infty_{p,q}\cong H_{p+q}$.  In the spectral sequence studied in Theorem \ref{t:main}, one can arrange for this by using the grading with  $D_{p,q}=A_{p, p+q}$ instead of $D_{p,q} = A_{p,q}$.}
\end{rem}

Note that Thereom \ref{t:main} implies that $E^\infty$ must be an operad algebra.  Suppose that we have elements $[{x_1}], \ldots , [{x_\ell}]$ of $E^\infty$ and an element ${p}$ in $H(P(n), \delta)$.
We can arrange to choose representatives $x_i$ of $[{x_i}]$ each in some $Z^r$ (with $r$ fixed).  Then
\[ {p}([{x_1}],\ldots , [{x_\ell]}) = [{{p}(x_1, \ldots , x_l)}] \]
is a well-defined action of $H(P, \delta)$ on $E^\infty$.  To see that this is well-defined, just note that any two representatives $x_i$ and $x_i'$ of $[{x_i}]$ must differ by something in $\underset{r}{\cup} B^r$.  However, Theorem \ref{t:main} shows  that the collection of maps $d^{(r)}= j^{(r)}k^{(r)}$ satisfy the Leibniz rule, which is precisely the condition needed to show that the choice of representative for $[{x_i}]$ does not matter.

Now we'll turn to the problem of determining what the ``something useful" should be.  In any exact couple
\[ \xymatrix{ D \ar[rr] && D \ar[dl] \\ & E \ar[ul] }\]
the ``something computable" is related to whatever $E$ might be.  It should not  be surprising, then, that the ``something desirable" is related to $D$.  We will present the desirable quantity for our particular setting of spectral sequences arising from $0\to A \overset{i}{\to} A \overset{j}{\to} C\to 0$, but much of what is  here can easily be translated into other settings.  To discover what the spectral sequence might converge to, we filter $D$ by images of the map $i: A\to A$.  Let
\[ H_q := \underset{p}{colim}\ A_{p, q}\]
where the colimit is taken over the maps $i:A_{p, q}\to A_{p+1, q}$, etc.  The graded objects $H_q$ are filtered;  we let $F_pH_q$ be the image of $A_{p,q}$ in $H_q$.  The $F_pH_q$'s form an exhaustive filtration of $H_q$ with 
\[ F_{p-1}H_q \subset F_pH_q\subset \cdots \subset H_q.\]
We consider the associated graded objects $F_pH_q/F_{p-1}H_q$.

\begin{prop}[\cite{W}, Proposition 5.9.6] \label{p:weibel}There is a natural inclusion  $\gamma : F_pH_q/F_{p-1}H_q\to E^\infty_{p, q}$. \end{prop}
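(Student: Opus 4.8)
The plan is to construct the map $\gamma$ explicitly on the level of the cycle-boundary description of $E^\infty$ and then check that it is well-defined and injective. First I would recall from Remark \ref{r:cyclesboundaries} that $E^\infty_{p,q} = \left(\cap_r Z^r_{p,q}\right)/\left(\cup_r B^r_{p,q}\right)$, where $Z^r = k^{-1}(i^{r-1}D)$ and $B^r = j(\ker i^{r-1})$, all living inside $E^1 = H(C,d_C)$. An element of $F_pH_q$ is represented by some $a \in A_{p,q}$ which is a cycle in the colimit sense, i.e. $d_A(a) \in \mathrm{im}(i)^{s}$ for arbitrarily large $s$ (since $d_A$ becomes zero after passing far enough along the tower of $i$'s). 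I would send the class of such $a$ to the class of $j(a) \in H(C,d_C)$: the point is that $j(a)$ is a $d_C$-cycle because $d_C j(a) = j(d_A a)$ and $d_A a$ is divisible by arbitrarily high powers of $i$, hence $j(d_A a) = 0$; and moreover the condition that $d_A a = i^{s}(b)$ for all $s$ forces $k[j(a)] = [i^{-1} d_A a] \in i^{s-1}D$ for all $s$, so $[j(a)] \in \cap_r Z^r$. This defines a candidate map $F_pH_q \to \cap_r Z^r_{p,q}$.

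Next I would identify the kernel of the composite $F_pH_q \to \cap_r Z^r \to E^\infty$. An element $[a] \in F_pH_q$ maps to zero exactly when $[j(a)] \in \cup_r B^r$, i.e. $[j(a)] = [j(c)]$ for some cycle $c \in C$ with $c \in \mathrm{im}(i^{-(r-1)})$-type condition — more precisely $c$ lifts to an element $\tilde c \in A$ with $i^{r-1}(\tilde c)$ a cycle for some $r$. Chasing through, this should say precisely that $a$ differs from an element of $\mathrm{im}(i) + (\text{boundaries})$ in the colimit, which is exactly $F_{p-1}H_q$ together with the relation already imposed in forming $H_q$. So the kernel is $F_{p-1}H_q$, and $\gamma$ descends to an injection $F_pH_q/F_{p-1}H_q \hookrightarrow E^\infty_{p,q}$. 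Naturality is then automatic since every map in sight ($j$, $i$, the colimit structure maps, the connecting homomorphism) is natural in the short exact sequence of chain complexes.

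The main obstacle I anticipate is bookkeeping the filtration degrees and the divisibility-by-$i$ conditions so that the well-definedness on both ends matches up exactly: one has to verify both that the map into $\cap_r Z^r$ is independent of the choice of representative $a \in A_{p,q}$ (changing $a$ by a boundary $d_A(a')$ changes $j(a)$ by the boundary $d_C(j(a'))$, so it's fine in homology) and that the induced relation on the target is neither too coarse nor too fine — i.e. that $\cup_r B^r$ pulls back to exactly $F_{p-1}H_q$ and not something smaller. This is the classical diagram-chase behind the convergence of a spectral sequence from an exact couple; since the statement is quoted as \cite{W}, Proposition 5.9.6, I would either reproduce that chase in the present notation (emphasizing that no module-theoretic subtlety beyond the existence of $\cap_r Z^r$ and $\cup_r B^r$, already noted, is needed) or simply cite it, as the excerpt already does. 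For the purposes of the paper the key content we actually use later is only the existence and naturality of $\gamma$, so I would keep the argument brief and lean on \cite{W} for the verification of injectivity.
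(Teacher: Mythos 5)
Your overall route is the same as the paper's (and Weibel's): send a representative $a$ of a class in $F_pH_q$ to $[j(a)]$, observe that this is a permanent cycle, and defer the remaining diagram chase to \cite{W}. However, two of the steps you do write out are wrong as stated. First, the description of representatives: $H_q$ is to be read at the level of homology (i.e.\ as $\operatorname{colim}_p D_{p,q}$ with $D=H(A,d_A)$, as in \cite{W}; otherwise ``$j(a)$ is an infinite cycle'' makes no sense), so an element of $F_pH_q$ is represented by an honest cycle $a\in A_{p,q}$ with $d_Aa=0$. Your substitute condition ``$d_A(a)\in \mathrm{im}(i^{s})$ for arbitrarily large $s$'' is not what holds, and the justification ``$d_A$ becomes zero after passing far enough along the tower of $i$'s'' is false: it conflates dying in a sequential colimit (being killed by some $i^{s}$) with being infinitely divisible by $i$, and the differential induced on the colimit is by no means zero. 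With $d_Aa=0$ one gets immediately that $j(a)$ is a $d_C$-cycle and that $k[j(a)]=[i^{-1}d_Aa]=0\in i^{r-1}D$ for every $r$, hence $[j(a)]\in\bigcap_r Z^r$; this is exactly the paper's one-line observation, and no divisibility condition is needed.

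Second, the well-definedness and kernel discussion. The ambiguity you check (changing $a$ by a boundary) is the harmless one; the ambiguity that actually requires the $B^r$'s is that two cycles $a,a'\in A_{p,q}$ represent the same element of $F_pH_q$ exactly when $i^{s}([a]-[a'])=0$ in $H(A)$ for some $s$, whence $[j(a)]-[j(a')]\in j(\ker i^{s})=B^{s+1}$ dies in $E^\infty$; and classes from $F_{p-1}H_q$ die because $j\circ i=0$ by exactness of the couple. Your description of $\bigcup_r B^r$ is also garbled: $B^r=j(\ker i^{r-1})$ consists of classes $j_*[\tilde c]$ with $\tilde c$ a cycle in $A$ and $i^{r-1}[\tilde c]=0$ in $H(A)$, i.e.\ $i^{r-1}(\tilde c)$ a \emph{boundary}, not merely ``a cycle'' as you write; with the condition as you state it, the injectivity chase (that $[j(a)]\in\bigcup_r B^r$ forces the class of $a$ into $F_{p-1}H_q$, using $\ker j=\mathrm{im}\, i$) does not go through. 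Since the paper itself only sketches the construction and cites \cite{W} and \cite{D}, deferring the chase is legitimate, but the portions you chose to spell out need these corrections before they match the argument you are citing.
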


The proof of the proposition is essentially due to the fact that for any $a\in A_{p,q}$ which represents an element of $F_pH_q/F_{p-1}H_q$, $j(a)$ is an infinite cycle.  This induces the map $\gamma$.  The details of the proof are outlined in \cite{D} and proved in detail in \cite{W}.
In good circumstances, the spectral sequence will converge to $H_*$.  What we mean by ``good circumstances" depends on the spectral sequence, but we'll give one example of a good convergence criterion here.  The exact couple is {\it bounded below} if for each $q$ there
exists a $p(q)$ such that $A_{p, q}=0$ whenever $p<p(q)$.  

\begin{theorem}[\cite{W}, Theorem 5.9.7]  If an exact couple is bounded below, then the spectral sequence converges to $H_*$. \end{theorem}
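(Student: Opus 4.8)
The plan is to deduce convergence from Proposition \ref{p:weibel}. That proposition already supplies, in every bidegree, a natural inclusion $\gamma\colon F_pH_q/F_{p-1}H_q\hookrightarrow E^\infty_{p,q}$, so all that remains is to show that the bounded-below hypothesis forces $\gamma$ to be surjective; it is then an isomorphism, which is exactly what convergence to $H_*$ means. Fix a bidegree $(p,q)$. By Remark \ref{r:cyclesboundaries}, $E^\infty_{p,q}=\big(\bigcap_r Z^r_{p,q}\big)\big/\big(\bigcup_r B^r_{p,q}\big)$ with $Z^r=k^{-1}(i^{r-1}D)$ and $B^r=j(\ker i^{r-1})$, so every class of $E^\infty_{p,q}$ is represented by some $z\in\bigcap_r Z^r_{p,q}\subseteq E^1_{p,q}=H(C)_{p,q}$, and it suffices to produce a preimage of each such $z$ under $\gamma$.

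First I would use boundedness below to collapse the tower of cycles in this bidegree. The connecting map $k$ carries $E^1_{p,q}$ into a fixed module $D_{p',q'}$, and the submodule $i^{r-1}D\subseteq D_{p',q'}$ is the image of $i^{r-1}\colon D_{p'-r+1,\,q'}\to D_{p',q'}$. Since $D=H(A)$ and the exact couple is bounded below, $D_{p'-r+1,\,q'}=0$ once $r$ is large (depending only on $p'$ and $q'$), whence $i^{r-1}D=0$ there and $Z^r_{p,q}=k^{-1}(0)=\ker\big(k\colon E^1_{p,q}\to D\big)$ for all large $r$. As the $Z^r_{p,q}$ are nested and decreasing, $\bigcap_r Z^r_{p,q}=\ker k$, and exactness of the couple at $E$ identifies this with the image of $j\colon D_{p,q}\to E^1_{p,q}$.

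Given such a $z$, choose $\bar a\in D_{p,q}$ with $j(\bar a)=z$. Its image in $H_q=\operatorname{colim}_p D_{p,q}$ lies in $F_pH_q$ and so determines a class in $F_pH_q/F_{p-1}H_q$; this class is independent of the lift, since two lifts of $z$ differ by an element of $\ker j=\operatorname{im}(i\colon D_{p-1,q}\to D_{p,q})$, whose image in $H_q$ lies in $F_{p-1}H_q$. By the description of $\gamma$ recalled after Proposition \ref{p:weibel} (it sends the class of $a$ to the infinite cycle $[j(a)]$), $\gamma$ applied to the class of $\bar a$ equals $[j(\bar a)]=[z]$. Hence $\gamma$ is surjective, so it is an isomorphism $F_pH_q/F_{p-1}H_q\cong E^\infty_{p,q}$ in every bidegree; since boundedness below also makes $F_\bullet H_q$ zero for $p\ll 0$ while the colimit description makes it exhaust $H_q$, this is precisely the assertion that the spectral sequence converges to $H_*$.

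The step I expect to be the main obstacle is pinning down the indices in the collapse of $Z^r_{p,q}$: one has to track the grading conventions fixed just before Theorem \ref{t:main} (vertical differentials of degree $(0,-1)$, $i$ of degree $(1,0)$, $j$ of degree $(0,0)$) through the connecting homomorphism $k$ and match them with the indexing of $H_q$ and of $\gamma$, then verify that ``for each $q$ there is a $p(q)$ with $A_{p,q}=0$ for $p<p(q)$'' really does annihilate the source of $i^{r-1}$ feeding the relevant copy of $D$. Once those indices are fixed, the remainder is just exactness of the exact couple together with Proposition \ref{p:weibel}, so no further difficulty should arise.
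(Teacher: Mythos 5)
Your proposal is correct, and since the paper itself gives no proof of this statement (it is imported from Weibel, Theorem 5.9.7), your argument is exactly the standard one behind that citation: boundedness below kills $i^{r-1}D$ in the target bidegree of $k$ for large $r$, so $\bigcap_r Z^r_{p,q}=\ker k=\operatorname{im} j$ by exactness, and lifting along $j$ shows every class of $E^\infty_{p,q}$ lies in the image of the inclusion $\gamma$ of Proposition \ref{p:weibel}, which is therefore an isomorphism of a bounded-below, exhaustive filtration's associated graded with $E^\infty$. The only discrepancy is that you read $H_q$ as $\operatorname{colim}_p D_{p,q}$ where the paper writes $\operatorname{colim}_p A_{p,q}$; these agree because homology commutes with the filtered colimit along $i$, and your reading is the one under which $\gamma$ and the convergence statement make sense, so no gap results.
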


Regardless of whether or not the spectral sequence converges to $H_*$, the associated graded modules are an algebra over the homology operad $H(P, \delta)$.  

\begin{theorem}  If $\{ E^r_{p,q}\}$ is a spectral sequence satisfying the hypotheses of Theorem \ref{t:main}, then the collection of associated graded modules $\{ F_pH_q/F_{p-1}H_q \}$ is an algebra over the operad $H(P, \delta)$.\end{theorem}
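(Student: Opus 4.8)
The plan is to transport the operad algebra structure on $E^\infty$, which we already have in hand, along the natural inclusion $\gamma$ of Proposition \ref{p:weibel}. Write $\mathrm{gr}\,H := \bigoplus_{p,q} F_pH_q/F_{p-1}H_q$. Since $\gamma \colon \mathrm{gr}\,H \to \bigoplus_{p,q} E^\infty_{p,q}$ is an injective map of bigraded modules, and $E^\infty$ is an $H(P,\delta)$-algebra by the discussion following Theorem \ref{t:main}, it suffices to check that $\mathrm{im}(\gamma)$ is closed under the $H(P,\delta)$-action on $E^\infty$. Granting this, $\mathrm{im}(\gamma)$ is a sub-$H(P,\delta)$-algebra of $E^\infty$, and $\gamma$ identifies $\mathrm{gr}\,H$ with it; the associativity, unit, and equivariance axioms for the resulting action on $\mathrm{gr}\,H$, and its compatibility with the bigrading, are then inherited from $E^\infty$ through the module isomorphism $\gamma$.

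Recall from the discussion preceding Proposition \ref{p:weibel} that a class in $F_pH_q/F_{p-1}H_q$ is represented by a $d_A$-cycle $a \in A_{p,q}$, and that $\gamma$ sends this class to the infinite cycle $[j(a)] \in E^\infty_{p,q}$. Let $\omega \in H(P(\ell),\delta)$ with $\delta$-cycle representative $w$, and let $d_A$-cycles $a_h \in A_{p_h,q_h}$ represent classes $\bar a_h$ in $F_{p_h}H_{q_h}/F_{p_h-1}H_{q_h}$ for $1 \le h \le \ell$. By the definition of the $H(P,\delta)$-action on $E^\infty$, Lemma \ref{l:action1}, and property (i),
\[ \omega\bigl(\gamma(\bar a_1),\ldots,\gamma(\bar a_\ell)\bigr) \;=\; \omega\bigl([j(a_1)],\ldots,[j(a_\ell)]\bigr) \;=\; \bigl[\,w(j(a_1),\ldots,j(a_\ell))\,\bigr] \;=\; \bigl[\,j(w(a_1,\ldots,a_\ell))\,\bigr]. \]
Now $w(a_1,\ldots,a_\ell) \in A_{\sum p_h,\sum q_h}$, and since $\delta(w)=0$ and each $d_A a_h = 0$ the derivation relation for the $P$-action on $A_{*,*}$ gives $d_A\bigl(w(a_1,\ldots,a_\ell)\bigr) = \delta(w)(a_1,\ldots,a_\ell) + \sum_h \pm\, w(a_1,\ldots,d_A a_h,\ldots,a_\ell) = 0$. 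Hence $w(a_1,\ldots,a_\ell)$ is a $d_A$-cycle representing a class in $F_{\sum p_h}H_{\sum q_h}/F_{\sum p_h-1}H_{\sum q_h}$, and $\gamma$ of that class is exactly $[j(w(a_1,\ldots,a_\ell))]$. This places the right-hand side above in $\mathrm{im}(\gamma)$, proving the required closure.

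I expect the only genuinely delicate point to be the bookkeeping around $\gamma$ and the filtration: checking that a representative may be taken to be a $d_A$-cycle, that $\gamma$ intertwines the two module structures (immediate, as $\gamma$ is induced by the additive map $a \mapsto [j(a)]$), and that the bidegrees match up. The operad-theoretic content is light: it amounts to the one application of property (i) above together with the observation that the $P$-action on $A_{*,*}$ preserves $d_A$-cycles. Alternatively one can avoid $\gamma$ and define the action on $\mathrm{gr}\,H$ directly by $\omega(\bar a_1,\ldots,\bar a_\ell) := \overline{w(a_1,\ldots,a_\ell)}$; then the work is well-definedness, and this is precisely where property (ii) earns its place — replacing $a_h$ by $a_h + i(a_h')$ changes $w(a_1,\ldots,a_\ell)$ by $w(a_1,\ldots,i(a_h'),\ldots,a_\ell) = i\bigl(w(a_1,\ldots,a_h',\ldots,a_\ell)\bigr)$, which lies in $i(A_{\sum p_h - 1,\sum q_h})$ and hence becomes $0$ in $F_{\sum p_h}H_{\sum q_h}/F_{\sum p_h-1}H_{\sum q_h}$, and similarly an input killed by a power of $i$ contributes an output killed by the same power of $i$.
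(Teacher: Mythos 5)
Your proposal is correct, but your primary route is genuinely different from the paper's. The paper proves the theorem directly: choose representatives $a_i\in A_{p_i,q_i}$ of the classes $y_i$, declare $p(y_1,\ldots,y_\ell)$ to be the image of $p(a_1,\ldots,a_\ell)$ in $F_pH_q$, and check well-definedness by observing that changing a representative within a lower filtration step changes the output only within the lower filtration step --- this is exactly the ``alternative'' you sketch in your last paragraph, and your use of property (ii) there spells out the filtration-shift argument the paper dismisses as ``straightforward to check.'' Your main argument instead transports the already-constructed $H(P,\delta)$-action on $E^\infty$ along the inclusion $\gamma$ of Proposition \ref{p:weibel}, verifying that the image of $\gamma$ is closed under the action via property (i) and the derivation relation. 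That route is sound and has the pleasant feature that Corollary \ref{t:convergence} (that $\gamma$ is a map of operad algebras) comes out by construction, since your closure computation shows $\gamma\bigl(\overline{w(a_1,\ldots,a_\ell)}\bigr)=\omega(\gamma\bar a_1,\ldots,\gamma\bar a_\ell)$; but it buys this at the cost of leaning on the injectivity of $\gamma$ and on the prior $E^\infty$ discussion, whereas the paper's direct definition is self-contained and makes the corollary a separate (if easy) statement. One small caveat: you take the representatives to be $d_A$-cycles, which is the right reading when $H_q$ is the colimit of the homology terms $D_{p,q}$ (the setting of Weibel's result the paper cites), and it is what makes $\gamma(\bar a)=[j(a)]$ meaningful; the paper's literal formula $H_q=\operatorname{colim}_p A_{p,q}$ and its own proof work with arbitrary module representatives, so you should state explicitly which reading you are using --- your transport argument needs the cycle condition, while the paper's direct argument does not.
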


\begin{proof} Let $p\in H(P(\ell), \delta$ and  $y_i\in F_{p_i}H_{q_i}/F_{p_i-1}H_{q_i}$ for $1\leq i\leq \ell$.  We can find representatives $a_i\in A_{p_i, q_i}$ of $y_i$.  First define $p(y_1, \ldots , y_\ell)$ to be the image of $p(a_1, \ldots , a_\ell)$ in $H_q$.  Note that $p(a_1, \ldots , a_\ell)\in A_{p, q}$ where $p=p_1+ \cdots + p_\ell$ and $q=q_1+\cdots + q_\ell$, so that the image is contained in $F_pH_q$.  If $a_i$ and $a_i'$ are two different representatives of $y_i$, then their difference is in $F_{p_i-1}H_{q_i}$.  It is straightforward to check that $p(a_1, \ldots , a_i , \ldots, a_l)$ and $p(a_1, \ldots , a_i', \ldots , a_n)$ vary by elements of $F_{p-1}H_q$, making this definition of the product well-defined.
\end{proof}

In light of Proposition \ref{p:weibel}, we can investigate the relationship between the operad action on $E^\infty$ and the operad action on $F_pH/F_{p-1}H$.  

\begin{cor} \label{t:convergence} The map $\gamma$ is a map of operad algebras.
\end{cor}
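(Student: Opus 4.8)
The plan is to show that $\gamma$ commutes with the operad action by chasing a single element through both descriptions of the action. Recall that $\gamma$ sends the class of $a \in A_{p,q}$ in $F_pH_q/F_{p-1}H_q$ to the class of $j(a)$ in $E^\infty_{p,q}$; this makes sense because $j(a)$ is an infinite cycle (it lies in $\cap_r Z^r$) whenever $a$ represents an element of the associated graded. So the map $\gamma$ is, at the level of representatives, nothing more than applying $j$.

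First I would fix an element $p \in H(P(\ell),\delta)$ and classes $y_1, \dots, y_\ell$ in the appropriate associated graded pieces $F_{p_i}H_{q_i}/F_{p_i-1}H_{q_i}$, with representatives $a_i \in A_{p_i,q_i}$. By the theorem just proved, the action of $p$ on the $y_i$ is represented by $p(a_1,\dots,a_\ell) \in A_{p,q}$, where $p = p_1 + \cdots + p_\ell$ and $q = q_1 + \cdots + q_\ell$. Applying $\gamma$ to $p(y_1,\dots,y_\ell)$ then yields the class of $j(p(a_1,\dots,a_\ell))$ in $E^\infty_{p,q}$. On the other side, $\gamma(y_i)$ is the class of $j(a_i)$ in $E^\infty$, and since $E^\infty$ is an operad algebra (by the remark following Theorem \ref{t:main}, using representatives in a common $Z^r$), the element $p(\gamma(y_1),\dots,\gamma(y_\ell))$ is represented by $p(j(a_1),\dots,j(a_\ell))$. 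The two representatives agree on the nose by property (i) defining $j$, namely $j(p(a_1,\dots,a_\ell)) = p(j(a_1),\dots,j(a_\ell))$, so the two classes in $E^\infty_{p,q}$ coincide.

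The one point requiring a little care — and the main obstacle, such as it is — is bookkeeping about \emph{which} filtration/cycle stage each representative lives in, so that the two operad actions being compared are genuinely the ones constructed earlier. For $\gamma(y_i)$ we need $j(a_i)$ to be an infinite cycle, which is exactly the content underlying Proposition \ref{p:weibel}; for the action on $E^\infty$ we need the $j(a_i)$ to be chosen with representatives in a common $Z^r$, which holds since each $j(a_i)$ already lies in $\cap_r Z^r$. One should also note that $p(a_1,\dots,a_\ell)$ itself represents an element of $F_pH_q/F_{p-1}H_q$, so that $\gamma$ can be applied to $p(y_1,\dots,y_\ell)$ at all; but this is immediate from $a_i \in A_{p_i,q_i}$ together with the bigrading compatibility of the operad action. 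Once these placements are recorded, the identity $j \circ p(\,-\,) = p(j(\,-\,), \dots, j(\,-\,))$ from property (i) does all the real work, and well-definedness on both sides has already been established in the two preceding results, so no further choices need to be examined.

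Thus the proof is essentially a one-line naturality check: $\gamma$ is induced by $j$, and $j$ is a map of operad algebras by hypothesis (i), so $\gamma$ is too. I would write it as: pick representatives, apply the formula for the action on the associated graded, apply the formula for the action on $E^\infty$, invoke property (i) to see the two representatives are equal, and conclude.
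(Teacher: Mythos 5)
Your argument is correct and is essentially the paper's own proof, just written out in detail: the paper also observes that $\gamma$ is induced by $j$ and that $j$ respects the operad action by the compatibility condition (the paper cites it as condition (ii), but it is the condition on $j$, your property (i)). Your element-level bookkeeping simply makes explicit what the paper leaves implicit.
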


\begin{proof} The proof follows immediately, since $\gamma$ is induced by the map $j$ which is a map of operad algebras by condition (ii) from the definition of $j$.
\end{proof}

When the spectral sequence converges to $H_*$, Theorem \ref{t:convergence} guarantees that the spectral sequence converges {\it as operad algebras}.  


\subsection{PROPS}

Operads are a special case of a more general construction, that of a PROP ({\bf pro}duct and {\bf p}ermutation category)  encoding operations with multiple inputs and multiple outputs.      A PROP is a
collection $P =\{P(m, n)\}$ for $  m, n \geq 0$  of  $(\Sigma_m, \Sigma_n)$-bimodules, where $\Sigma_m$ acts on the left and $\Sigma_n$ ono the right and the two actions commute.  We think of $P(m,n)$ as representing a space of maps which have $m$ inputs and $n$ outputs, where $\Sigma_n$ switches the order of the inputs and $\Sigma_n$ the outputs.   We require two  types of compositions,
horizontal
$$\otimes:    P(m_1, n_1) \otimes \cdots \otimes P(m_s, n_s) \to P(m_1 + \cdots + m_s, n_1 + \cdots+ n_s)$$
 and vertical
$$\circ : P(m, n) \otimes P(n, k) \to  P(m, k)$$
and a unit element ${\bf 1} \in P(1,1)$ satisfying certain conditions (see for example \cite{MSS} for details).

Similarly, a  $P$-algebra $A$ is a collection of maps $$ \{ \Gamma:  P(m, n) \otimes A^{\otimes m} \to A^{\otimes n} \}_{m, n \geq 0} $$
that are compatible with the horizontal and the vertical compositions, the unit elements, and the $\Sigma_n-\Sigma_m$ actions.

As before, if we have a PROP $P$ of  $ \sf {dg-Mod}_k$, we can consider spectral sequences of $ \sf {dg-Mod}_k$ algebras over $P$ and write down  the conditions needed for the PROP structure to persist throughout the spectral sequence.    The conditions are exactly the same as those listed above for operad algebras.  Since we assume that the structure maps $\Gamma$ are maps of $ \sf {dg-Mod}_k$-modules, they commute with the differential defined via a Leibniz condition on   $P(m, n) \otimes A^{\otimes m}$ and $A^{\otimes n}$.  If we assume that we have a short exact sequence of PROP algebras leading to an exact couple and that the injection $i$ and surjection $j$ satisfy the conditions (i) and (ii) given before Theorem \ref{t:main}, the proof still holds if we just interpret $p(x_1, \dots, x_n)$ as an element in $A^{\otimes n}$.  Therefore our proof also shows how to get a spectral sequence of PROP algebras.

\begin{example}  The endomorphism PROP of a  $k$-module $V$ is the system
$EndV = \{EndV (m, n)\}_{m,n\geq 0} $ 
with $EndV (m, n)$ the space of linear maps $ Lin(V^{\otimes n}, V^{\otimes m}) $ with n ÔinputsÕ and m Ôoutputs,Õ
${\bf{1}}  \in  EndV (1, 1)$ the identity map, horizontal composition given by the tensor product of linear
maps, and vertical composition by the ordinary composition of linear maps.  \end{example}
\section{Spectral sequences in topology} \label{s:spaces}

   Now we will consider spectral sequences of basepointed topological spaces.  We choose our category of topological spaces to be nice enough so that the smash product is associative;  for example, the category of compactly generated spaces will do (this is the category used in \cite{EKMM} and \cite{HSS}, for example).  A basepointed subspace $A$ of a space $B$ gives rise to a long exact sequence
\begin{equation}\label{e:les} \xymatrix{ \cdots \ar[r] & \pi_n(A) \ar[r] & \pi_n(B) \ar[r] & \pi_n(B, A) \ar[r] & \cdots}\end{equation}
where $\pi_*(B, A)$ are the relative homotopy groups.  Elements in $\pi_n(B,A)$ are homotopy classes of commuting diagrams
\[ \xymatrix{ S^{n-1} \ar[r]  \ar@{^{(}->}[d] & A \ar@{^{(}->}[d] \\ D^n \ar[r] & B}.\]  A sequence of inclusions
\[ \dots \subset A_n \subset A_{n-1} \subset \cdots \subset A_1 \subset A_0 = B\]
gives rise to long exact sequences for each $A_n \subset A_{n-1}$; these fit together to produce an exact couple with $D_{p,q}= \pi_q(A_p)$ and $E_{p,q}=\pi_q(A_{p-1}, A_p)$.  In the next section, we will show how this exact couple arises explicitly.

\subsection{Towers of spaces.}  In topology,  spectral sequences often arise more generally from a tower of pointed spaces and maps which are not necessarily inclusions.   In this subsection, we will review the basic constructions needed to produce a spectral sequences from a tower of pointed spaces.  An excellent reference for this material is \cite{D}, or see any number of introductions to spectral sequences such as \cite{McC}.  

We consider a tower of pointed spaces and  maps $W$:

\[ \xymatrix{ \cdots \ar[r] & W_{m+1} \ar[r]^{q_{m}}& W_m \ar[r]^{q_{m-1}}  & W_{m-1} \ar[r] & \cdots\ar[r]^{q_0} & W_0}\]

For any spaces $A$ and $B$ with a map $f:A\to B$, we can generalize the relative homotopy groups by defining $\pi_n(B, A)$ to be the set of homotopy classes of commuting diagrams
\[ \xymatrix{ S^{n-1} \ar[r]  \ar@{^{(}->}[d] & A \ar[d]^{f} \\ D^n \ar[r] & B}.\]
These relative homotopy groups are isomorphic to the homotopy groups of the homotopy fiber of the map $f$, which is defined to be the homotopy pull-back of the diagram
\[ \xymatrix{  & \star \ar[d]  \\ A \ar[r]^{f} & B. } \]
Thus, these are groups for $n\geq 1$, abelian groups for $n\geq 2$ and fit into a long exact sequence of homotopy groups similar to the one in equation (\ref{e:les}).    In particular, it will be useful later to consider $\pi_*(W_m, W_{m+t})$ for each $t\geq 1$ and the map $W_{m+t} \to W_m$ defined by $q^t=q_m \circ \cdots \circ q_{m+t-1}$.  We denote the tower $W$ with grading shifted by $t$ as $W_{+t}$.
For each $t$, assembling the long exact sequences for $\pi_*(W_m, W_{m+t})$ for each $m\geq 0$ results in an exact couple of homotopy groups for any tower $W$:
\begin{equation} \label{e:extcpl}\xymatrix{ \pi_*(W) \ar[rr]^i && \pi_*(W)\ar[dl]^j\\
& \pi_*(W, W_{+t})\ar[ul]^k }\end{equation}
with $i=\pi_*(q^t)$.  When $t=1$, this often gives rise to a spectral sequence, although there may be problems with exactness since  $\pi_0$ might not be a group and $\pi_1$ might not be an abelian group.  We will assume that we are in a situation where these problems do not cause any concern (e.g. all spaces are connected enough).   The $E^1$ term of the spectral sequence we are interested in will correspond to the case $t=1$; however it will be useful to define the maps $j$ and $k$ for the more general case $t\geq 1$.

To define the maps $j$ and $k$ in the exact couple more precisely, begin by selecting orientation preserving homeomorphisms $S^n\cong D^n/S^{n-1}$ for all $n$. An element $[\alpha] \in \pi_n(W_k, W_{k+t})$ is represented by a diagram
\[ \xymatrix{ S^{n-1} \ar@{^{(}->}[d]\ar[r]^{\alpha_0} & W_{k+t} \ar[d]^{q^t} \\ D^n \ar[r]^{\alpha_1} & W_k}.\]  
Then $k[\alpha]\in \pi_{n-1}(W_{k+t})$ is represented by $\alpha_0$.  It is convenient to think of this as an element of the relative homotopy group $\pi_{n-1}(W_{k+t}, \star)$ represented by 
\[ \xymatrix{ S^{n-2} \ar[r] \ar@{^{(}->}[d] & \star \ar[d] \\ D^{n-1} \ar[r]^{\alpha_0'} & W_{k+t} }\]  
where $\alpha_0'$ is the map $\alpha_0$ precomposed with the quotient map $D^{n-1}\to D^{n-1}/S^{n-2}$ and our preselected homeomorphism $D^{n-1}/S^{n-2} \cong S^{n-1}$.  This is the boundary map of the long exact sequence for the pair $(W_{k}, W_{k+t})$.  Similarly, if $[\beta]$ is an element of $\pi_n(W_k)$  represented by
\[ \xymatrix{ S^{n-1} \ar[r] \ar@{^{(}->}[d] & \star \ar[d] \\ D^n \ar[r]^{\beta_1} & W_k} \] in $\pi_*(W_k, \star)$ then $j[\beta]$ is represented by
\[ \xymatrix{ S^{n-1} \ar[r]^{\star} \ar@{^{(}->}[d] & W_{k+t} \ar[d]^{q^t} \\ D^n \ar[r]^{\beta_1} & W_k} \]
in $\pi_n(W_k, W_{k+t})$ where $\star$ is the unique map which factors through the one-point space $\star$.  With these definitions,  one can see that $i$, $j$ and $k$ satisfy the exactness conditions needed to ensure that the triangle in (\ref{e:extcpl}) is an exact couple.

The differential of the exact couple is given as usual by the composition $jk$.  To produce the $E^2$ page of the associated spectral sequence, we take $E^2= \operatorname{ker} jk/ \operatorname{im} jk$.  
In particular, taking $t=1$ we have $E^1_{p,q} = \pi_p(W_{q-1}, W_{q})$.
Subsequent pages of the spectral sequence are constructed in a similar manner using the derived maps $j^{(r)}$ and $k^{(r)}$.  The differentials $d^{(r)}$ are $d^{(r)}=j^{(r)}k^{(r)}$ and satisfy  \[j^{(r)}k^{(r)}: E^r_{p,q}\to E^r_{p-1, q+r}.\]
It will be useful to have a way to recognize elements of $\pi_*(W, W_{+1})$ which survive to the $E^r$ page of the associated spectral sequence.  As in Remark 3.2, define subgroups
\[ Z^r_{p,q} = \operatorname{ker} \left(j^{(r)}k^{(r)}: E_r^{p,q} \to E_r^{p-1, q+r}\right).\]
In order to identify those elements of $\pi_*(W, W_{+1})$ which survive to $E^r$, a good first step is to understand the elements which lie in each $Z^r$.  The following proposition appears as Lemma 3.3 of  \cite{D}.

\begin{lem}\label{l:cycles}  An element $\alpha\in \pi_p(W, W_{+1})$ lies in $Z^r_{p,q}$ if and only if $\alpha$ can be represented as a diagram $D$:
\[ \xymatrix{ S^{p-1} \ar@{^{(}->}[d] \ar[r]^{\alpha_0} & W_{k+1} \ar[d]^{q_k} \\ 
D^p \ar[r]^{\alpha_1} & W_k } \]  
in which the map $\alpha_0:S^{p-1} \to W_{k+1}$ factors through $W_{k+r}$.
\begin{proof}
We provide here only an idea of the proof for $r=2$, as the full details can be found in \cite{D}.   If $\alpha$ lies in $Z^2_{p,q}$, then $jk(\alpha)$ must be null.  If $\alpha$ is represented by a diagram $D$ as in the statement of the Lemma,  then $jk(\alpha)$ is the diagram
\[ \xymatrix{ S^{p-2} \ar[r]^\star \ar@{^{(}->}[d] & W_{k+2} \ar[d] ^q \\ D^{p-1} \ar[r]^{\alpha_0'} & W_{k+1}}.\]
This diagram represents the zero element in homotopy if and only if the induced quotient map
\[ S^{p-1} = D^{p-1}/S^{p-2} \to W_{k+1}/W_{k+2} \]
is null-homotopic.  But since this map is induced by $\alpha_0$, this happens exactly when $\alpha_0$ factors through $W_{k+2}$.
However the factorization need only be up to homotopy, and
the rest of the proof  proceeds by applying the homotopy extension property to rigidify this argument.
\end{proof}
\end{lem}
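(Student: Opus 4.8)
The plan is to deduce the lemma from the description of cycles in a derived exact couple recorded in Remark~\ref{r:cyclesboundaries}, namely $Z^r=k^{-1}(\operatorname{im}i^{r-1})$, applied to the topological exact couple with $E^1=\pi_*(W,W_{+1})$ and $i=\pi_*(q)$. Concretely, I would first unwind the definitions of the maps $i$ and $k$ given above. If $\alpha$ lies in $\pi_p(W_k,W_{k+1})$ and is represented by a diagram $D$ with top edge $\alpha_0\colon S^{p-1}\to W_{k+1}$ and bottom edge $\alpha_1\colon D^p\to W_k$, then the connecting map sends $\alpha$ to $k(\alpha)=[\alpha_0]\in\pi_{p-1}(W_{k+1})$, while the $(r-1)$-fold iterate $i^{r-1}$ carries $\pi_{p-1}(W_{k+r})$ into $\pi_{p-1}(W_{k+1})$ by precomposition with the composite tower map $q^{r-1}=q_{k+1}\circ\cdots\circ q_{k+r-1}\colon W_{k+r}\to W_{k+1}$. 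Hence $\alpha\in Z^r_{p,q}$ exactly when there is a map $\beta\colon S^{p-1}\to W_{k+r}$ with $q^{r-1}\circ\beta$ homotopic to $\alpha_0$ inside $W_{k+1}$.

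With this reformulation in hand, one direction is immediate: if the chosen representative already has $\alpha_0=q^{r-1}\circ\gamma$ for some $\gamma\colon S^{p-1}\to W_{k+r}$, then $k(\alpha)=[\alpha_0]=i^{r-1}[\gamma]\in\operatorname{im}i^{r-1}$, so $\alpha\in Z^r_{p,q}$.

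The substance of the argument is the converse, which is the rigidification step indicated in the author's $r=2$ sketch. Suppose $\alpha\in Z^r_{p,q}$, so we have $\beta$ as above together with a homotopy $H\colon S^{p-1}\times I\to W_{k+1}$ with $H_0=\alpha_0$ and $H_1=q^{r-1}\circ\beta$. Composing with $q_k$ gives a homotopy $q_k\circ H\colon S^{p-1}\times I\to W_k$ starting at $q_k\circ\alpha_0=\alpha_1|_{S^{p-1}}$. Since $S^{p-1}\hookrightarrow D^p$ is a cofibration, the homotopy extension property produces $\widetilde H\colon D^p\times I\to W_k$ with $\widetilde H_0=\alpha_1$ and $\widetilde H|_{S^{p-1}\times I}=q_k\circ H$. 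Setting $\alpha_0'=H_1=q^{r-1}\circ\beta$ and $\alpha_1'=\widetilde H_1$, the restriction condition on $\widetilde H$ guarantees $q_k\circ H_t=\widetilde H_t|_{S^{p-1}}$ for every $t$, so each $(H_t,\widetilde H_t)$ is a commuting diagram of the required type and $(H_t,\widetilde H_t)$ is a homotopy of such diagrams from $D$ to the new diagram $D'=(\alpha_0',\alpha_1')$. Thus $D'$ also represents $\alpha$, and its top edge $\alpha_0'=q^{r-1}\circ\beta$ factors through $W_{k+r}$ by construction; taking all maps and homotopies to be based makes everything compatible with basepoints.

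The only genuine obstacle is this last step: membership in $\operatorname{im}i^{r-1}$ yields only a factorization of $\alpha_0$ up to homotopy, and promoting it to an on-the-nose factorization of a bona fide representative of $\alpha$ forces us to exploit the cofibrancy of $S^{p-1}\hookrightarrow D^p$; the remainder is bookkeeping with the definitions of $i$, $j$, $k$ and of relative homotopy classes. An alternative would be to induct on $r$, taking the author's $r=2$ case as the base and applying the same homotopy-extension rigidification one tower stage at a time, but the direct route through $Z^r=k^{-1}(\operatorname{im}i^{r-1})$ seems shortest.
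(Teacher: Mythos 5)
Your proposal is correct and follows essentially the same route as the paper's sketch (and Dugger's Lemma 3.3 to which it defers): translate membership in $Z^r = k^{-1}(\operatorname{im} i^{r-1})$ into the statement that $\alpha_0$ factors through $W_{k+r}$ up to (based) homotopy, and then use the homotopy extension property for the cofibration $S^{p-1}\hookrightarrow D^p$ to change the representing square so that the factorization holds strictly. The only difference is presentational: the paper's sketch treats $r=2$ via the vanishing of $jk(\alpha)$ and a quotient-space criterion, while you invoke the exact-couple description of $Z^r$ directly for all $r$ and write out the rigidification step that the paper leaves to the reference.
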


\begin{rem} If $\alpha\in Z^r$ can be represented as a square diagram with a lift
\[ \xymatrix{ & W_{k+r}\ar[d]^{q^{r-1}} \\ S^{p-1}\ar[ur] \ar@{^{(}->}[d] \ar[r]^{\alpha_0} & W_{k+1} \ar[d]^{q_k} \\ 
D^p \ar[r]^{\alpha_1} & W_k } \]  
then $d^{(r)}([\alpha])$ is $jk$ applied to the ``outer" square diagram
\[ \xymatrix{ S^{p-1} \ar@{^{(}->}[d] \ar[r]^{\alpha_0} & W_{k+r} \ar[d]^{q^r} \\ 
D^p \ar[r]^{\alpha_1} & W_k } .\]  
\end{rem}

\subsection{Operads acting on towers of spaces.}  Now suppose that $P = \{ P(i) \}_{i\geq 1}$ is an operad of basepointed spaces.  

\begin{defn}\label{d:toweraction}  We say that $P$ acts on the tower $W$ if, for each $i\geq 1$ and each partition $i= i_1 + \ldots + i_k$, there are maps
\[\gamma: P(k)\wedge W_{i_1} \wedge \cdots \wedge W_{i_k} \to W_i \]
which strictly commute with the structure maps $q$ of the tower $W$, and which satisfy
\begin{enumerate}
\item Equivariance:  The symmetric group $\Sigma_k$ acts on $P(k)$ since $P$ is an operad, and acts on $W_{i_1} \wedge \cdots \wedge W_{i_k}$ by permuting the factors.  The map $\gamma$ is $\Sigma_k$-fixed with respect to the action of $\Sigma_k$ on 
\[ P(k)\wedge W_{i_1} \wedge \cdots \wedge W_{i_k}\]
in which $\sigma\in\Sigma_k $ acts by $\sigma\wedge \sigma^{-1}$.
\item Associativity: The two evident composition maps
\[ P(k) \wedge (P(m_1)\wedge\cdots \wedge P(m_k)) \wedge (\bigwedge_{t=1}^m W_{i_t}) \to W_i,\]
where $m=m_1 +\cdots + m_k$ and $i=i_1 +\cdots + i_m$, are equal.
\end{enumerate}For further details, see \cite{MSS}.
\end{defn}

\subsection{Products in relative homotopy}
In this section, we show that given a tower $W$ which admits an action by an operad $P$, the spectral sequence arising from the exact couple of equation (\ref{e:extcpl}) is a spectral sequence of $P$-algebras.   The first step is to explain in which sense $\pi_*(W, W_{+t})$ is a $P$-algebra.   To accomplish this, we first have to understand products of classes in the relative homotopy groups $\pi_*(W, W_{+t})$.
%
%
%
This product structure on relative homotopy groups is explained in \cite{D} and we recall the basic construction.  

Suppose that $f:A\to B$ and $g:C\to D$ are basepointed maps of topological spaces, and let $x\in \pi_n(B, A)$ and $y\in \pi_m(D, C)$ be represented by diagrams
\[ \xymatrix{ S^{n-1} \ar[r]^{x_0} \ar[d] & A \ar[d]^f  && S^{m-1}\ar[r]^{y_0}\ar[d] & C\ar[d]^{g} \\ 
D^n\ar[r]^{x_1} & B && D^m\ar[r]^{y_1} & D}\]
respectively.  We will  define their product $x\cdot y$ in $\pi_{n+m}(B\wedge D, M_{f\wedge g})$ where $M_{f\wedge g}$ is the pushout
\begin{equation}\label{eq:P(f,g)} \xymatrix{ A\wedge C\ar[r] \ar[d] & B\wedge C \ar[d] \\  A\wedge D \ar[r] & M_{f\wedge g}}.\end{equation}
 The pushout $M_{f\wedge g}$ is equipped with a cannonical map $M_{f\wedge g}\to B\wedge D$, given by the universal property of the pushout diagram. The product  $x\cdot y$ is defined by
\[ \xymatrix{ S^{n+m-1} \ar[rr]^{\cong\qquad\qquad} \ar[d]&& S^{n-1}\wedge D^m \underset{\small{S^{n-1}\wedge S^{m-1}}}{\coprod } D^n\wedge S^{m-1}\ar[d] \ar[rr]^{\qquad \qquad \qquad x_0\wedge y_1 + x_1\wedge y_0} && M_{f\wedge g}\ar[d] \\ 
D^{n+m} \ar[rr]^{\cong} && D^n\wedge D^m \ar[rr]^{x_1\wedge y_1} && B\wedge D}.\]  
We denote the top map by $(x\cdot y)_0$ and the bottom map by $(x\cdot y)_1$.  Note that $x\cdot y = y\cdot x$ since the smash product $\wedge$ is symmetric.

We would like to show that this product is associative, but first we have to make sense out of what this means.  Suppose that $h:E\to F$ is another basepointed map of spaces, and that $z\in \pi_l(F, E)$ is represented by 
\[ \xymatrix{ S^{l-1} \ar[r]^{z_0} \ar[d] & E\ar[d]^{h}\\ D^l \ar[r]^{z_1} & F}.\]
Now consider the two elements $(x\cdot y)\cdot z$ and $x\cdot (y\cdot z)$.  The first is represented by a diagram
{\footnotesize \[ \xymatrix{  {S^{n+m+l-1}}\ar[r]^{\cong\qquad \qquad \qquad}\ar[d] 
&  {S^{n+m-1}\wedge D^l}\underset{ {S^{n+m-1}\wedge S^{l-1}}}{\coprod}  {D^{n+m}\wedge S^{l-1}} \ar[rr]^{\qquad \qquad \qquad \qquad {(x\cdot y)_0\wedge z_1 + (x\cdot y)_1\wedge z_0}} \ar[d] 
&&  {\qquad M_{(f\wedge g)\wedge h}}\ar[d] \\ 
 {D^{n+m+l}}\ar[r]^{\cong} &  {D^{n+m}\wedge D^l} \ar[rr] &&  {(B\wedge D)\wedge F}}\] }
where $M_{(f\wedge g)\wedge h}$ is the pushout
\begin{equation}\label{eq:P(f,g),h} \xymatrix{ M_{f\wedge g} \wedge E \ar[r] \ar[d] & (B\wedge D)\wedge E \ar[d]^{\alpha} \\ 
M_{f\wedge g} \wedge F \ar[r]^{\beta} & M_{(f\wedge g)\wedge h} }.\end{equation}
On the other hand, $x\cdot (y\cdot z)$ is represented by
{\footnotesize 
\[ \xymatrix{  {S^{n+m+l-1}}\ar[r]^{\cong\qquad \qquad \qquad} \ar[d] 
&  {S^{n-1}\wedge D^{m+l} \underset{S^{n-1}\wedge S^{m+l-1}}{\coprod} D^{n}\wedge S^{m+l-1}} \ar[rr]^{\qquad \qquad \qquad \qquad {x_0\wedge (y\cdot z)_1 + x_1\cdot (y\cdot z)_0}} \ar[d] && 
 {\qquad M_{f\wedge (g\wedge h)}}\ar[d] \\ 
 {D^{n+m+l}}\ar[r]^{\cong} &  {D^{n}\wedge D^{m+l}} \ar[rr] &&  {B\wedge (D\wedge F)}}\] }
where $M_{f\wedge (g\wedge h)}$ is the pushout
\[ \xymatrix{ A\wedge M_{g\wedge h}  \ar[r] \ar[d] & B\wedge M_{g\wedge h} \ar[d] \\ A \wedge (D\wedge F) \ar[r] & M_{f\wedge (g\wedge h)} }.\]

Our goal is to show that this product is associative, which asserts that the groups \[\pi_{n+m+l}(B\wedge D\wedge F, M_{(f\wedge g)\wedge h})\quad \text{and}\quad  \pi_{n+m+l}(B\wedge D\wedge F, M_{f\wedge(g\wedge h)})\] are isomorphic, and that the isomorphism carries $(x\cdot y)\cdot z$ to $x\cdot (y\cdot z)$.  To show this, we will be making use of some careful decompositions of spheres and  disks and their boundaries as colimits.

To show that $(x\cdot y)\cdot z$ is the same as $x\cdot (y\cdot z)$ it will be convenient to instead work with a three-fold product $x\cdot y\cdot z$.  To define the iterated product $(x\cdot y)\cdot z$, we made use of the fact that the $n+m-1$ sphere can be obtained from the $n-1$ and $m-1$ spheres by the pushout diagram
\[ \xymatrix{ S^{n-1}\wedge S^{m-1} \ar[r] \ar[d] & D^n \wedge S^{m-1} \ar[d] \\ S^{n-1} \wedge D^m \ar[r] & S^{n+m-1}}.\]
This is a familiar construction, which amounts to saying that the boundary of the $n+m$ ball $D^{n+m}$ can be written as a union of $\partial D^{n}\wedge D^m$ and $D^{n} \wedge \partial D^{m}$ provided that we keep track of the fact that these should be glued together along $\partial D^n \wedge \partial D^m$.  This construction can be generalized to any number of factors.  
Suppose that  $D^n$ is a product of smaller disks via a homeomorphism
\[ D^n\cong D^{n_0}\wedge D^{n_1} \wedge \cdots \wedge D^{n_k}.\]
Then $S^{n-1}$ is homeomorphic to a quotient of 
\[ \coprod_{0\leq i\leq k} D^{n_0} \wedge \cdots \wedge S^{n_i} \wedge\cdots \wedge D^{n_k}\] 
whose quotient data is obtained by understanding how to attach the boundary pieces in this sum to one another.  
This can be done by taking the colimit of a cubical diagram.  
\begin{defn} \label{d:cubical} Let $[k]$ be the set $\{0, \ldots, k\}$ and let ${\mathbb P}_0([k])$ be the power set of {\it proper} subsets of $[k]$.  Then define a functor $\chi:{\mathbb P}_0([k]) \to Top_*$ by 
\[ \chi(U)=X_0(U)\wedge \cdots X_k(U)\] 
for each $U\subsetneq [k]$, where
\[ X_i(U) = \begin{cases} D^{n_i} & \text{if $i\in U$ and} \\ S^{n_i -1} & \text{if $i\notin U$.} \end{cases}\] 
\end{defn}

The functor $\chi$ takes an inclusion of subsets to the map of topological spaces obtained by inclusion of the boundary.  Each such map carries the assembly data we require, and the boundary of $D^n$ is homeomorphic to the colimit
\[ \underset{U\subset {\mathbb P}([k])}{\operatorname{colim}} \chi .\]

\begin{ex} The boundary of the cube $D^1\times D^1\times D^1$ is homeomorphic to the 2-sphere, $S^2$.  Definition \ref{d:cubical} simply encodes the information needed to express the 2-sphere as the union of its parallel faces glued along common edges.  The edges, in turn, are glued along common vertices.  All of this information is encoded in the cube $\chi$.
\end{ex}

\begin{lem} \label{l:assoc} The product $\cdot$ defined above is associative.
\end{lem}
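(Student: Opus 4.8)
The plan is to reduce associativity of the product on relative homotopy groups to a statement about colimits of cubical diagrams of spheres and disks, exactly in the spirit of the machinery set up in Definition \ref{d:cubical}. The key observation is that both $(x\cdot y)\cdot z$ and $x\cdot(y\cdot z)$ are, up to reparametrizing the sphere and disk, the \emph{same} three-fold construction: a map out of $\partial(D^n\wedge D^m\wedge D^l)$ into a suitable pushout target, extended over $D^n\wedge D^m\wedge D^l$. So the strategy is to define directly a symmetric three-fold product $x\cdot y\cdot z$ landing in $\pi_{n+m+l}(B\wedge D\wedge F, \, M_{f\wedge g\wedge h})$, where $M_{f\wedge g\wedge h}$ is the colimit over ${\mathbb P}_0([2])$ of the cubical diagram that sends a proper subset $U\subsetneq\{0,1,2\}$ to the smash of $A$-or-$B$ in coordinate $0$, $C$-or-$D$ in coordinate $1$, $E$-or-$F$ in coordinate $2$ (with the ``$B$/$D$/$F$'' choice made precisely when the index lies in $U$). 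Then I would show separately that $(x\cdot y)\cdot z = x\cdot y\cdot z$ and $x\cdot(y\cdot z)=x\cdot y\cdot z$, which gives the lemma.

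First I would establish the target-space identification: $M_{(f\wedge g)\wedge h}$, defined by the pushout in (\ref{eq:P(f,g),h}), is canonically isomorphic to the cubical colimit $M_{f\wedge g\wedge h}$, and likewise $M_{f\wedge(g\wedge h)}\cong M_{f\wedge g\wedge h}$. This is a colimit-of-colimits computation: the ``2-dimensional'' pushout in (\ref{eq:P(f,g),h}) is the pushout of a span obtained by further decomposing $M_{f\wedge g}$ via (\ref{eq:P(f,g)}), and a routine cofinality/Fubini argument for colimits identifies the iterated pushout with the colimit over the full $3$-cube minus its terminal vertex. Both groupings $(f\wedge g)\wedge h$ and $f\wedge(g\wedge h)$ manifestly give the same cube, so they yield canonically isomorphic colimits, and the canonical maps to $B\wedge D\wedge F$ agree. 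Second, on the source side, I would use the generalization of the pushout decomposition of spheres described just before Definition \ref{d:cubical}: choosing the homeomorphism $D^{n+m+l}\cong D^n\wedge D^m\wedge D^l$, the boundary $S^{n+m+l-1}$ is the colimit of $\chi$ over ${\mathbb P}_0([2])$ with $X_0,X_1,X_2$ of dimensions $n,m,l$. Tracing through the definition of the two-fold product twice shows that $(x\cdot y)\cdot z$, transported along these homeomorphisms, is exactly the map on $\operatorname{colim}\chi$ assembled from $x_0\wedge y_0\wedge z_1$, $x_0\wedge y_1\wedge z_0$, $x_1\wedge y_0\wedge z_0$ on the three ``codimension-one'' faces (and their common refinements on lower faces), composed with the identification of targets from step one; the same holds for $x\cdot(y\cdot z)$. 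Hence both equal $x\cdot y\cdot z$, and associativity follows.

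The main obstacle I expect is bookkeeping rather than conceptual: one must check that all the reparametrizing homeomorphisms $S^{n+m+l-1}\cong\operatorname{colim}\chi$ and $D^{n+m+l}\cong D^n\wedge D^m\wedge D^l$ can be chosen compatibly with the ones already fixed for the two-fold products, so that the equalities hold on the nose and not merely up to homotopy — and that the signs/orientation conventions hidden in ``$x_0\wedge y_1 + x_1\wedge y_0$'' propagate correctly through the triple product. Because we work with homotopy classes, any discrepancy can in principle be absorbed by a homotopy, but to keep the argument clean I would pin down the homeomorphisms once and for all at the start (as the paper already does for $S^n\cong D^n/S^{n-1}$) and verify associativity and commutativity of the chosen identifications of cubes. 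Once the target spaces are identified via step one, the remaining verification is a direct unwinding of definitions, so I do not anticipate any essential difficulty beyond diagram-chasing discipline.
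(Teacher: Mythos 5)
Your proposal follows essentially the same route as the paper's proof: form the symmetric cubical colimit (your $M_{f\wedge g\wedge h}$, the paper's $M$), identify it with both $M_{(f\wedge g)\wedge h}$ and $M_{f\wedge(g\wedge h)}$ by the Fubini theorem for iterated colimits, decompose $S^{n+m+l-1}$ as the colimit of the cube $\chi$ of Definition \ref{d:cubical}, and check that both groupings are carried to a symmetric triple product $x\cdot y\cdot z$. The only slip is bookkeeping: the maps on the top-dimensional pieces $S^{n-1}\wedge D^m\wedge D^l$, $D^n\wedge S^{m-1}\wedge D^l$, $D^n\wedge D^m\wedge S^{l-1}$ of the decomposition should be $x_0\wedge y_1\wedge z_1$, $x_1\wedge y_0\wedge z_1$, $x_1\wedge y_1\wedge z_0$ (exactly one sphere coordinate at a time), rather than the maps with two subscript-$0$ factors you list, which live on the lower strata; this does not affect the argument.
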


\begin{proof}  Let $f:A\to B$, $g:C\to D$ and $h:E\to F$ and choose elements $x$, $y$ and $z$ as in the preceding paragraphs.  
To prove the assertion, we show that both $M_{(f\wedge g)\wedge h}$ and $M_{f\wedge (g\wedge h)}$ are homeomorphic to a third space, $M$.  Define $M$ to be the colimit of the punctured cubical diagram
{\footnotesize \[ \xymatrix{ A\wedge C \wedge E \ar[rr] \ar[dr] \ar[dd] && A\wedge C\wedge F \ar[dr] \ar'[d][dd]\\
& A\wedge D\wedge E \ar[dd] \ar[rr] && A\wedge D\wedge F\\
B\wedge C\wedge E \ar'[r][rr] \ar[dr] && B\wedge C\wedge F \\
&B\wedge D \wedge E}\]}

\noindent where each set of three parallel maps is given by $f$, $g$ or $h$.  Note that there is a unique map from $M$ to $B\wedge D\wedge F$.

To show that $M\cong M_{(f\wedge g)\wedge h}$, we use the Fubinic Theorem for colimits \cite{MacLane} which states that for small categories $I$ and $J$, and for a functor ${\sc F}$ on $I\times J$, the iterated colimit and the double colimits are isomorphic:
\[ {\operatorname{colim}_{I\times J}} {\sc F} \cong {\operatorname{colim}_J}{\operatorname{colim}_I} \tilde{\sc F}\]
where $\tilde{\sc F}$ is the functor which for each $i\in I$ produces a functor $\tilde{\sc F}(i)$ on $J$, defined  by $\tilde {\sc F}(i)(j) = {\sc F}(i,j)$.

The cubical diagram defining $M$ can be written as the colimit of three diagrams, each with the shape of a pushout, as follows:

\[ \def\objectstyle{\scriptstyle} \xymatrix{ B\wedge C \wedge F & A\wedge C \wedge F \ar[l] \ar[r] & A\wedge D\wedge F  \\
B\wedge C \wedge E\ar@{-->}[u] \ar@{-->}[d] & A\wedge C \wedge E \ar[l] \ar[r] \ar@{-->}[u]\ar@{-->}[d] & A\wedge D\wedge E \ar@{-->}[u]\ar@{-->}[d]\\
B\wedge D \wedge E & B\wedge D \wedge E \ar@{=}[l] \ar@{=}[r]  & B\wedge D\wedge E 
}.\]

The pushout of the top row is $M_{f\wedge g}\wedge F$; the pushout of the middle row is $M_{f\wedge g}\wedge E$ and  the pushout of the bottom row is trivially $B\wedge D\wedge E$.  Taking the iterated pushout, we obtain $M_{(f\wedge g)\wedge h}$.  
The Fubinic Theorem says precisely that in this situation,  $M\cong M_{(f\wedge g)\wedge h}$.

The cube defining $M$ is symmetric with respect to the maps\\  $f:A\to B$, $g:C\to D$ and $h:E\to F$. Taking advantage of that symmetry we see that $M\cong M_{f\wedge (g\wedge h)}$ as well.

There exists a unique map $P\to B\wedge D\wedge F$ given by applying $f$, $g$ and $h$ to each of the terminal vertices of the cube which defines $M$.  

The sphere $S^{n+m+l-1}$ can be obtained as the boundary of\\  $D^n\wedge D^m\wedge D^l$ as in Definition \ref{d:cubical}.  The cubical diagram which describes the attaching information can be rewritten as
\begin{equation}\label{d:sphere} \def\objectstyle{\scriptstyle} \xymatrix{ S^{n-1}\wedge D^m \wedge D^l & S^{n-1}\wedge S^{m-1}\wedge D^l \ar[l] \ar[r] & D^n\wedge S^{m-1}\wedge D^l  \\
S^{n-1}\wedge D^m \wedge S^{l-1}\ar[u] \ar[d] & S^{n-1}\wedge S^{m-1}\wedge S^{l-1}\ar[l] \ar[r] \ar[u]\ar[d] & D^n\wedge S^{m-1}\wedge S^{l-1} \ar[u]\ar[d]\\
D^n\wedge D^m \wedge S^{l-1} & D^n\wedge D^m \wedge S^{l-1} \ar@{=}[l] \ar@{=}[r]  & D^n\wedge D^m\wedge S^{l-1} 
}\end{equation}
where $\chi({\emptyset}) = S^{n-1}\wedge S^{m-1}\wedge S^{l-1}$, etc.
Each of the maps in this diagram are induced by the inclusion of a boundary sphere into a disk.  The colimit of the diagram is the sphere $S^{n+m+l-1}$. 

The maps 
\begin{align*} x_0\wedge y_1\wedge z_1&: S^{n-1}\wedge D^m \wedge D^l \to M\\
x_1\wedge y_0\wedge z_1 &:D^n\wedge S^{m-1}\wedge D^l \to M\\
x_1\wedge y_1\wedge z_0 &: D^n\wedge D^m \wedge S^{l-1}\to M \end{align*}
define a map from the cubical diagram (\ref{d:sphere}) to $M$.  Since the sphere is the colimit of diagram (\ref{d:sphere}), this produces a map \[{\sum x_i\wedge y_j\wedge z_k}:S^{m+n+l-1}\to M.\]  Putting this together with the map $M\to B\wedge D\wedge F$  we produce the element $x\cdot y\cdot z\in \pi_{m+n+l-1}(M; B\wedge D\wedge F)$ represented by
\[ \xymatrix{ S^{n+m+l-1}\ar[r]^{\cong\quad} \ar[d]& \underset{U\subsetneq \{0, 1, 2\}}{\operatorname{colim}} \chi(U)\ar[r]^{\qquad \sum x_i\wedge y_j\wedge z_k} \ar[d] & P \ar[d] \\
 D^{n+m+l}\ar[r] & D^n\wedge D^m\wedge D^l\  \ar[r]^{\quad x_1\wedge y_1\wedge z_1} & \ B\wedge D\wedge F}\]
which commutes precisely because the diagrams representing the elements $x$, $y$ and $z$ commute.

By applying the Fubinic Theorem to diagram (\ref{d:sphere}), we see that the homeomorphism $M\cong M_{(f\wedge g)\wedge h}$ produces an isomorphism 
\[\pi_*(P, B\wedge D\wedge F)\cong \pi_*(P_{(f\wedge g)\wedge h}, B\wedge D\wedge F)\]
 which carries $x\cdot y\cdot z$ to $(x\cdot y)\cdot z$.  A symmetric argument shows that $x\cdot y \cdot z$ is carried to $x\cdot (y\cdot z)$ under the isomorphism induced by 
$M\cong M_{f\wedge(g\wedge h)}$.

\end{proof}

\begin{prop}\label{p:E_1algebra}  If the operad $P$ acts on the tower $W$, then $\pi_*(W, W_{+t})$ is an algebra over the operad $\pi_*(P)$ of graded groups.  That is, there are maps
\[ \pi_*(P(k))\otimes \pi_*(W, W_{+t})^{\otimes k} \to \pi_*(W, W_{+t})\]
satisfying the properties described in Section 2.2.\end{prop}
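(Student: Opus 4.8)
The plan is to build the structure maps by combining the product $\cdot$ on relative homotopy groups recalled above with the action maps $\gamma$ of $P$ on the tower $W$. Fix $k\ge 1$. Given $p\in\pi_n(P(k))$, regard it as a class in the relative group $\pi_n(P(k),\star)$ for the inclusion $\star\hookrightarrow P(k)$; given $x_h\in\pi_{m_h}(W_{a_h},W_{a_h+t})$ for $1\le h\le k$ (the relevant map of the pair being $q^t\colon W_{a_h+t}\to W_{a_h}$), use the product $\cdot$ and its associativity (Lemma \ref{l:assoc}) to form the unambiguous iterated product
\[ p\cdot x_1\cdot\dots\cdot x_k\ \in\ \pi_{n+m_1+\dots+m_k}\bigl(P(k)\wedge W_{a_1}\wedge\dots\wedge W_{a_k},\ M\bigr), \]
where $M$ is the colimit of the punctured cube whose vertices are the smash products obtained from $P(k)\wedge W_{a_1}\wedge\dots\wedge W_{a_k}$ by replacing some factors with the corresponding subspaces. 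Every vertex in which the $P(k)$-coordinate is replaced by $\star$ is the basepoint, so the cube splits off the $P(k)$-factor, and the Fubini theorem for colimits used in the proof of Lemma \ref{l:assoc} identifies $M\cong P(k)\wedge M_W$, where $M_W$ is the colimit of the punctured cube built from $W_{a_1}\wedge\dots\wedge W_{a_k}$ by replacing factors $W_{a_h}$ with $W_{a_h+t}$ (for $k=2$ this $M_W$ is the pushout $M_{f_1\wedge f_2}$ of equation (\ref{eq:P(f,g)})).

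Next I would check that $\gamma$ descends to a map of pairs
\[ \gamma\colon\bigl(P(k)\wedge W_{a_1}\wedge\dots\wedge W_{a_k},\ P(k)\wedge M_W\bigr)\longrightarrow\bigl(W_i,\ W_{i+t}\bigr),\qquad i=a_1+\dots+a_k. \]
On the total space this is the given $\gamma$, landing in $W_i$. On a vertex $P(k)\wedge W_{b_1}\wedge\dots\wedge W_{b_k}$ of the punctured cube at least one $b_h$ equals $a_h+t$, so $\gamma$ sends it into $W_{b_1+\dots+b_k}$, which maps to $W_{i+t}$ by an iterate of the structure maps $q$; since $\gamma$ strictly commutes with the maps $q$ (Definition \ref{d:toweraction}), these vertex maps are compatible both with the cube maps and with the map into the total space, so they assemble into a well-defined map $P(k)\wedge M_W\to W_{i+t}$ fitting into the square above. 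I then define
\[ p(x_1,\dots,x_k)\ :=\ \gamma_*\bigl(p\cdot x_1\cdot\dots\cdot x_k\bigr)\ \in\ \pi_{n+m_1+\dots+m_k}(W_i,W_{i+t})\ \subseteq\ \pi_*(W,W_{+t}), \]
which is well defined on homotopy classes since both $\cdot$ and $\gamma_*$ are, and which respects the bigrading by tower level and homotopical degree.

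It then remains to verify the axioms of Section 2.2 for the operad $\pi_*(P)$, whose composition is induced from that of $P$ together with the external products $\pi_a(X)\otimes\pi_b(Y)\to\pi_{a+b}(X\wedge Y)$. Additivity in each argument follows from bilinearity of the product $\cdot$ together with the fact that $\gamma_*$ is a homomorphism on each homotopy group. Equivariance follows from the symmetry of $\cdot$ — a consequence of the symmetry of $\wedge$ and of the symmetric decompositions of spheres and disks underlying the definition of $\cdot$ — combined with the hypothesis (Definition \ref{d:toweraction}(1)) that $\gamma$ is fixed by the $\sigma\wedge\sigma^{-1}$ action of $\Sigma_k$: acting by $\sigma$ on $p$ and simultaneously permuting the $x_h$ moves $p\cdot x_1\cdot\dots\cdot x_k$ by exactly this action, which $\gamma_*$ then annihilates. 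Unitality is immediate, since $\gamma(\mathbf{1},-)$ is the identity on each $W_a$, so the class $\mathbf{1}\in\pi_0(P(1))$ acts as the identity. For the composition axiom, I would expand both $p(q_1(x_{1,\bullet}),\dots,q_k(x_{k,\bullet}))$ and $\bigl(\gamma_{\pi_*(P)}(p;q_1,\dots,q_k)\bigr)(x_{1,\bullet},\dots,x_{k,\bullet})$ using the definition and the naturality of $\cdot$ in the structure maps: each becomes $\gamma_*$ — taken through a different bracketing of the maps $\gamma$ — of the one iterated product $p\cdot q_1\cdot\dots\cdot q_k\cdot x_{1,1}\cdot\dots$; the two bracketings of the $\gamma$'s agree by the associativity of the operad action (Definition \ref{d:toweraction}(2)), and the two bracketings of the iterated product agree by Lemma \ref{l:assoc}.

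The step I expect to be the main obstacle is the one in the second paragraph: correctly identifying the subspace $M$ of the iterated product as $P(k)\wedge M_W$ and checking that $\gamma$ carries it into $W_{i+t}$. This requires a careful reading of the cubical colimit description from the proof of Lemma \ref{l:assoc} and the uniform use of the compatibility of $\gamma$ with the tower maps $q$ across all vertices of the cube. Once $\gamma$ is known to be a map of pairs, the remaining naturality bookkeeping for the composition axiom is conceptually routine.
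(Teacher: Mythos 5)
Your construction is correct and is essentially the paper's own argument: the paper builds the representing square for $p(x_1,\ldots,x_k)$ directly by writing $S^{n-1}$ as the colimit of the punctured cube of Definition \ref{d:cubical}, sending each vertex by the smashed representatives followed by $\gamma$ and iterated tower maps $q$ into $W_{i+t}$ (using the strict compatibility of $\gamma$ with $q$), which is exactly the data you encode by saying $\gamma$ is a map of pairs $\bigl(P(k)\wedge W_{a_1}\wedge\cdots\wedge W_{a_k},\,P(k)\wedge M_W\bigr)\to(W_i,W_{i+t})$. Your packaging as an iterated external product (via Lemma \ref{l:assoc}) pushed forward along $\gamma_*$ unwinds to the same representative diagram, so this is the same proof in slightly different, and somewhat tidier, clothing.
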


\begin{proof}  First note that the graded groups $\pi_*(P(k))$ form an operad since $\pi_*$ is a product preserving functor.

To define the maps $\pi_*(P(k))\otimes \pi_*(W, W_{+t} )^{\otimes k}\to \pi_*(W, W_{+t})$, suppose that we are given elements
$\rho\in \pi_{n_0}(P(k))$ represented by
\[ \xymatrix{ S^{n_0-1} \ar[r] \ar[d] & \star\ar[d] \\ D^{n_0} \ar[r]^{\rho_1} & P(k) }\]
and elements $\alpha_i\in \pi_{n_i}(W_{m_i}, W_{m_i+t})$, $1\leq i\leq k$, represented by
\[ \xymatrix{ S^{n_i-1} \ar[r]^{\alpha_{i_0}} \ar[d] & W_{m_i+t} \ar[d]^{q^t} \\ D^{n_i} \ar[r]^{\alpha_{i_1}} & W_{m_i} .} \]
Let $n=n_0 + n_1 + \cdots + n_k$ and $m=m_1 + \cdots + m_k$.  Our goal is to produce horizontal maps for the diagram
\[ \xymatrix{ S^{n-1} \ar[r]^{\beta_0} \ar[d] & W_{m+t} \ar[d]^{q^r} \\ D^n \ar[r]^{\beta_1} & W_m}.\]
To do this, first we write $D^n$ as a product of smaller disks via a choice of homeomorphism
\[ D^n\cong D^{n_0}\wedge D^{n_1} \wedge \cdots \wedge D^{n_k}.\]
Now we write the boundary $S^{n-1}$ of $D^n$ as the colimit of the $k$-dimensional punctured cubical diagram $\chi$ of Definition \ref{d:cubical}.

For each $U\subsetneq [k]$, define a map $\chi(U) \to P\wedge W \wedge \cdots \wedge W$ by 
\[ \xymatrix{ X_0(U) \wedge X_1(U) \wedge \cdots \wedge X_k(U)\ar[rrr]^{\qquad f_0(U)\wedge f_1(U) \wedge \cdots \wedge f_k(U)} &&& P\wedge W \wedge \cdots \wedge W}\]
where 
\[ f_i(S) = \begin{cases} \alpha_{i_1} & \text{when $i\neq 0$ and $i\in U$,}\\
                                         \alpha_{i_0} & \text{when $i\neq 0$ and $i \notin U$,}\\
                                         \rho_1 & \text{ when $i=0$ and $i\notin U$, and }\\
                                         \star & \text{ when $i=0$ and $i\in U$.} \end{cases} \]
 
Now we compose with the operad structure maps to produce maps 
\[ \chi(U) \to W\]
for each $U$.  In the case $0\in U$, this map is null.  If $0\notin U$ and if $|U|=u$, then the image of this map lies in $W_{m+(k-u+1)t}$.  These maps commute in the sense that if $U\subset U'$ with $|U|=u$ and $|U'|=u'$, then
\begin{equation} \label{d:chi(S)} \xymatrix{ \chi(U) \ar[r] \ar[d]& W_{m+(k-u+1)t} \ar[d]^q\\
 \chi(U')\ar[r] & W_{m+(k-u'+1)t} }\end{equation}
commutes, where $q$ denotes the appropriate composition of maps $q$ from the tower $W$ and the other vertical map is induced by $\chi$ applied to the inclusion $U\subset U'$.
In particular, when $S$ is maximal (i.e. $|U|=k$), then the operad action produces maps $\chi(U) \to W_{m+r}$.  The commuting of diagram (\ref{d:chi(S)}) ensures that each vertex $\chi(U)$ maps to $W_{m+t}$ by a map which factors through $W_{m+(k-u+1)t}$.  
  Thus the universal property of the colimit ensures that we have produced a map
\[ \beta_0:S^{n-1} \to W_{m+t},\]
as desired.

The map $\beta_1:D^n \to W_m$ is the composition
\[ \xymatrix{ D^{n_0} \wedge D^{n_1} \wedge \cdots \wedge D^{n_k} \ar[rr]^{\rho_1\wedge \cdots \wedge \alpha_{k_1}} && P(k) \wedge W_{m_1} \wedge \cdots \wedge W_{m_k} \ar[r] & W_m}\]
whose second map is given by the action of $P$ on $W$.

The resulting square diagram with the maps $\beta_0$ and $\beta_1$ commutes because the operad structure maps are required to commute with the maps $q$ of the tower.  The action of $\pi_*(P)$ on $\pi_*(W, W_{+t})$ is equivariant and associative because the action of $P$ on $W$ is so.  Thus we conclude that $\pi_*(W, W_{+t})$ is an algebra over $\pi_*(P)$.

\end{proof}

From now on, we denote the product of $\rho, \alpha_1, \ldots, \alpha_k$ defined in the previous proposition by $\rho(\alpha_1, \ldots, \alpha_k)$.

\subsection{Differential, derivations and the main result.}
We are now finally ready to show that the spectral sequence arising from the exact couple of equation (\ref{e:extcpl}) is a spectral sequence of operad algebras.
In order to do so, we must ensure that the differential $jk$ behaves well with regard to the structure maps coming from the action of $P$ on $W$.  That is, we need to know that $jk$ is a differential, just as in Section 2.1:
\[ jk( \rho(\alpha_1, \ldots, \alpha_k) = \sum_{i=1}^k \pm \rho(\alpha_1, \ldots, jk(\alpha_i), \ldots, \alpha_k)\]
where $\rho\in \pi_{n_0}(P(k))$ and $\alpha_i\in \pi_{n_i}(W_{m_i}, W_{m_i+t})$ for $1\leq i\leq k$.
Note that since $P$ as a pair is $(P, \star)$, the differential $jk(\rho)$ must be zero, hence the expected first term of the sum has been left out.  The sign, as before, is $(-1)^{n_0+ \cdots + n_k}$ on the $i$-th term of the summand.

\begin{prop} \label{p:jk-der} The homomorphisms $jk:\pi_*(W, W_{+t}) \to \pi_*(W, W_{+t})$ are derivations with respect to the action of $\pi_*(P)$ on $\pi_*(W, W_{+t})$.
\end{prop}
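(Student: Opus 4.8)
The plan is to unwind the definitions and reduce the derivation identity for $jk$ to a statement about how the boundary operator interacts with the cubical decomposition of $S^{n-1}$ used in Proposition \ref{p:E_1algebra}. Recall that $\rho(\alpha_1,\dots,\alpha_k)$ is represented by the square with bottom map $\beta_1$ built from the operad action and top map $\beta_0:S^{n-1}\to W_{m+t}$ assembled from the maps $\chi(U)\to W$. Applying $k$ extracts the restriction $\beta_0$, viewed (after our fixed homeomorphism $D^{n-1}/S^{n-2}\cong S^{n-1}$) as an element of $\pi_{n-1}(W_{m+t},\star)$; applying $j$ then reinterprets this as a relative class in $\pi_{n-1}(W_{m+t},W_{m+2t})$ with trivial top map. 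So the whole computation of $jk(\rho(\alpha_1,\dots,\alpha_k))$ comes down to analyzing the map $\beta_0$ out of $S^{n-1}=\operatorname{colim}_{U\subsetneq[k]}\chi(U)$.

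The key step is the combinatorial observation that the punctured cube $\mathbb{P}_0([k])$ decomposes $S^{n-1}$ into the ``free faces'' $D^{n_0}\wedge\cdots\wedge S^{n_i-1}\wedge\cdots\wedge D^{n_k}$ (one for each $i=0,\dots,k$), glued along lower faces. On the $i$-th free face with $i\neq 0$, the map $f_i$ is $\alpha_{i_0}$ while all the other $f_j$ ($j\neq i$, $j\neq 0$) are $\alpha_{j_1}$ and $f_0=\rho_1$; after composing with the operad action and collapsing the interior (which collapses all faces with $|U|<k$, i.e. all the gluing data), this free face contributes exactly $\rho(\alpha_1,\dots,\alpha_{i_1}\text{-collapsed-to-}\alpha_{i_0},\dots,\alpha_k)$, which is precisely a representative of $\rho(\alpha_1,\dots,jk(\alpha_i),\dots,\alpha_k)$. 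The $i=0$ free face carries $f_0=\star$ (since $0\notin U$ would give $\rho_1$, but the free face for index $0$ is where $0\notin U$... here one must be careful: the face $S^{n_0-1}\wedge D^{n_1}\wedge\cdots$ has $0\notin U$ so $f_0=\rho_1$, but $\rho_1$ restricted to $S^{n_0-1}$ is the basepoint map, so this face is null), giving the zero term that explains why the sum omits the $\rho$-slot. Collecting the free faces with the correct orientation signs $(-1)^{n_0+\cdots+n_{i-1}}$ coming from our chosen homeomorphisms $S^n\cong D^n/S^{n-1}$ yields the stated Leibniz formula.

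Concretely I would carry this out in these steps: (1) fix the homeomorphism $D^n\cong D^{n_0}\wedge\cdots\wedge D^{n_k}$ and recall that $\partial(D^n)=\operatorname{colim}\chi$, identifying the top-dimensional cells of this colimit as the $k+1$ free faces; (2) compute $k(\rho(\alpha_1,\dots,\alpha_k))$ as the class of $\beta_0:S^{n-1}\to W_{m+t}$, then pass to $\pi_{n-1}(W_{m+t},\star)$ via the collapse $D^{n-1}\to D^{n-1}/S^{n-2}$; (3) observe that this collapse map kills every $\chi(U)$ with $|U|<k-1$ wait—rather, identify which subcomplex of the boundary becomes the basepoint, namely everything except the interiors of the free faces, so that the resulting map from $S^{n-1}$ is a wedge/sum over the $k+1$ free faces; (4) on each free face identify the composite with the operad action as $\pm\rho(\alpha_1,\dots,jk(\alpha_i),\dots,\alpha_k)$ using that collapsing $D^{n_i}$ to $D^{n_i}/S^{n_i-1}$ and precomposing $\alpha_{i_1}$ converts it to (a representative of) $k(\alpha_i)$ followed by $j$; (5) handle the $i=0$ face, which is null since $\rho_1|_{S^{n_0-1}}=\star$, matching the omission of the first summand; (6) track the signs through all the collapse homeomorphisms. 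I expect step (3)–(4) to be the main obstacle: making precise, at the space level rather than just up to homotopy, that the collapsed $\beta_0$ is genuinely the sum of the $k+1$ face-maps and that each face-map composed with the operad structure is honestly $\rho$ applied with $\alpha_i$ replaced by its boundary. As in Lemma \ref{l:cycles}, the rigidification will use the homotopy extension property / the colimit description to replace homotopy-commuting data with strictly commuting data, and the equivariance and associativity of the $P$-action on $W$ guarantee the face-maps glue correctly.
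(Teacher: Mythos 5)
Your route is genuinely different from the paper's. The paper does not re-analyze the cubical construction at all: it quotes Dugger's Proposition 4.1 of \cite{D}, which is the \emph{binary} Leibniz formula $jk(x\cdot y)=jk(x)\cdot y\pm x\cdot jk(y)$ for the pairing on relative homotopy, writes $\rho(\alpha_1,\ldots,\alpha_k)$ as an iterated two-fold product, inducts on the number of factors, and uses Lemma \ref{l:assoc} to know that the iterated product agrees with the operadic product of Proposition \ref{p:E_1algebra}. You instead propose to prove the $k$-ary Leibniz rule directly from the colimit description of $\beta_0$.

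The problem is that the step you yourself flag as ``the main obstacle'' (your steps (3)--(4)) is not a rigidification technicality; it is the entire content of the proposition, and your outline does not supply it. You must show that the class of $\beta_0$ in $\pi_{n-1}(W_{m+t},W_{m+2t})$ equals the signed sum of the $k$ classes $\rho(\alpha_1,\ldots,jk(\alpha_i),\ldots,\alpha_k)$. The free faces of the cubical decomposition are glued along loci that are not sent to the basepoint but only into the deeper stage $W_{m+2t}$, so the naive pinch argument (collapse everything but the interiors of the free faces and read off a wedge sum) does not apply verbatim: you are decomposing a \emph{relative} class, and the additivity of such a decomposition is precisely what has to be proved. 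In Dugger's two-variable case this is done not by the homotopy extension property but by the degree argument sketched in the Remark following the proposition: reduce to the universal case $f=(S^{p-1}\hookrightarrow D^p)$, $g=(S^{q-1}\hookrightarrow D^q)$, construct the injection $D:\pi_{p+q-1}(M,A\wedge C)\to\mathbb{Z}\oplus\mathbb{Z}$, and compute the images of $jk(x\cdot y)$, $(kx)\cdot y$ and $x\cdot(ky)$; the signs come out of that computation, not merely out of the chosen homeomorphisms. Your plan therefore needs the $k$-fold analogue of this universal-case degree computation (an injection into $\mathbb{Z}^{k}$ and the evaluation of all relevant classes), or else the reduction the paper actually uses: binary Leibniz from \cite{D}, associativity from Lemma \ref{l:assoc}, and induction. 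Your observations that $k$ extracts $\beta_0$, that $j$ makes the top map trivial, and that the $i=0$ face is null because $\rho_1|_{S^{n_0-1}}=\star$ (explaining the missing $\rho$-term) are correct, but they set up the identity rather than prove it.
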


\begin{proof}  This is Proposition 4.1 of \cite{D}, together with Lemma \ref{l:assoc}, which proves that the product $\cdot$ is associative.  That is, we have
\[ jk(\rho \cdot \alpha_1\cdots \alpha_k) = jk(\rho)\cdot (\alpha_1\cdots \alpha_k) + (\pm 1)(\rho)\cdot jk(\alpha_1\cdots \alpha_k)\]
by \cite{D}.  Now continue inductively to evaluate $jk(\alpha_1\cdots \alpha_k)$.  So doing produces the desired result.  The fact that $\cdot$ is associative says that this formulation of the product in terms of iterated products is equivalent to any other formulation of the product, which completes the proof.
\end{proof}

\begin{rem}{\em  Proposition 4.1 is the heart of the product structure for spectral sequences in \cite{D}.  The proof there goes roughly as follows.  First, reduce to the case where $f:A\to B$ is $S^{p-1}\hookrightarrow D^p$ and $g:C\to D$ is $S^{q-1}\hookrightarrow D^q$.  In that case, $M$ is the pushout of \[S^{p-1}\wedge D^q \leftarrow S^{p-1}\wedge S^{q-1} \rightarrow D^p\wedge S^{q-1},\]
which is a $(p+q-1)$-sphere.  The map $M \to A\wedge C$ is essentially inclusion of the equator.    The smash product $A\wedge C$ is a $S^{p+q-2}$ sphere, and we wish to compute the three homotopy elements $j_*k(x\cdot y)$, $(kx)\cdot y$ and $x\cdot(ky)$ in $\pi_{p+1-1}(S^{p+q-1}, S^{p+q-2})$.  Define a map $D:\pi_{p+q-1}(M, A\wedge C) \to \pi_{p+1-1}(M/A\wedge C)$ by sending an element 
\[ \xymatrix{ S^{p+q-2}\ar[r]^{\alpha_0}\ar[d] & A\wedge C \ar[d]\\
D^{p+q-1}\ar[r]^{\alpha_1} & M }\]
to the quotient map $\alpha:D^{p+q-1}/S^{p+q-2} \to M/A\wedge C$.  This is a map from $S^{p+q-1} $ to $S^{p+q-1}\vee S^{p+q-1}$, hence $D:\pi_{p+q-1}(M, A\wedge C) \to {\mathbb Z}\bigoplus {\mathbb Z}$.  The proof proceeds by showing that $D$ is an injection, and that $D(jk(x\cdot y)) = (1,1)$, $D((kx)\cdot y) = (1,0)$ and $D(x\cdot(ky)) = (0, (-1)^p)$.  The images of $D$ are verified by examining the geometry of the various spheres involved - the signs, for example, arise by keeping track of the orientations of each copy of $S^{p+q-1} $ in $S^{p+q-1}\vee S^{p+q-1}$. } 
\end{rem}

\begin{theorem}  Let $\{ E^r \}$ be the spectral sequence  which arises from the exact couple of diagram (\ref{e:extcpl}) in the case $t=1$.  The operad action of $P$ on the tower $W$ induces an action of $\pi_*(P)$ on each
term $E^r$ of the spectral sequence, and the differentials $d^{(r)}$ satisfy the Leibniz rule 
\[ d^{(r)}(\rho(\alpha_1\cdots \alpha_k)) = 
(\pm 1)(\rho)d^{(r)}(\alpha_1\cdots \alpha_k)\]
for all $\rho\in \pi_m(P)$ and $\alpha_i\in E^r_{p_i, q_i}$, $1\leq i \leq k$.  The sign is given by $(-1)^p$ where $p = p_1 + \cdots + p_k$.  
\begin{proof}  We have already proved this theorem for $r=1$ in Propositions \ref{p:E_1algebra} and \ref{p:jk-der}.  To complete the proof, note that $\alpha_i\in E^r_{p_i, q_i}$ are in particular elements of $Z^r_{p_i, q_i}$.  By Lemma \ref{l:cycles}, each of these can be represented by a square diagram with a lift:
\[ \xymatrix{ & W_{q_i+r} \ar[d] \\ S^{p_i-1} \ar[r] \ar@{^{(}->}[d] \ar[ur] & W_{q_i+1} \ar[d] \\ D^{p_i} \ar[r] & W_{q_i} }.\]
Applying Propositions \ref{p:E_1algebra} and \ref{p:jk-der} to the outer ``square" diagrams produces the desired result.

\end{proof}
\end{theorem}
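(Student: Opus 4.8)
The plan is to induct on $r$. The base case $r=1$ is already in hand: Proposition~\ref{p:E_1algebra} equips $E^1=\pi_*(W,W_{+1})$ with an action of the operad $\pi_*(P)$, and Proposition~\ref{p:jk-der} shows that $d^{(1)}=jk$ is a derivation for that action. Since each page is the homology of the previous one with respect to its differential, the inductive step requires two things: that the $\pi_*(P)$-action on $E^r$ descends to $E^{r+1}=Z^{r+1}/B^{r+1}$, and that $d^{(r+1)}$ again satisfies the Leibniz rule. For the descent I would first check that $\rho(\alpha_1,\ldots,\alpha_k)$ lies in $Z^{r+1}$ whenever all the $\alpha_i$ do, and then that its class in $E^{r+1}$ is, modulo $B^{r+1}$, independent of the chosen representatives; the latter is formal once all the lower differentials $d^{(s)}$, $s\le r$, are known to be derivations, exactly as in the algebraic case of Theorem~\ref{t:main} and in the convergence subsection.

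The geometric engine for the membership claim is Lemma~\ref{l:cycles} together with the Remark immediately following it. Given $\alpha_i\in E^r_{p_i,q_i}\subseteq Z^r_{p_i,q_i}$, Lemma~\ref{l:cycles} lets me choose for each $\alpha_i$ a representing square diagram whose upper map $S^{p_i-1}\to W_{q_i+1}$ lifts through $W_{q_i+r}$. I would then feed these lifted representatives into the punctured cubical construction from the proof of Proposition~\ref{p:E_1algebra}: because the operad structure maps $\gamma\colon P(k)\wedge W_{i_1}\wedge\cdots\wedge W_{i_k}\to W_i$ strictly commute with the tower maps $q$, the partial lifts glue over the cubical diagram $\chi$ to a factorization of the structure map $\beta_0$ of $\rho(\alpha_1,\ldots,\alpha_k)$ through a deeper stage of $W$. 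This exhibits $\rho(\alpha_1,\ldots,\alpha_k)$ as an element of $Z^r$ — and, running the same argument with $r$ replaced by $r+1$, of $Z^{r+1}$ when the $\alpha_i\in Z^{r+1}$.

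For the Leibniz rule, the Remark following Lemma~\ref{l:cycles} says that with the lifted representatives above, $d^{(r)}(\alpha_i)$ is $jk$ applied to the ``outer'' square diagram whose upper-right corner is $W_{q_i+r}$ and whose right-hand map is $q^r\colon W_{q_i+r}\to W_{q_i}$ — that is, to $\alpha_i$ regarded as an element of $\pi_*(W,W_{+r})$, the relative homotopy group of the $r$-shifted tower. The cubical assembly of the previous paragraph should show precisely that the outer diagram of $\rho(\alpha_1,\ldots,\alpha_k)$ is the $\pi_*(P)$-product, in the sense of Proposition~\ref{p:E_1algebra} applied with $t=r$, of the outer diagrams of the $\alpha_i$. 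Granting this, $d^{(r)}\bigl(\rho(\alpha_1,\ldots,\alpha_k)\bigr)$ equals $jk$ of that product, and Proposition~\ref{p:jk-der} applied to $\pi_*(W,W_{+r})$ rewrites it as $(\pm1)(\rho)\cdot jk(\alpha_1\cdots\alpha_k)$ with the asserted sign $(-1)^{p}$, $p=p_1+\cdots+p_k$ (the term hitting $\rho$ vanishing since $jk(\rho)=0$). Reading the right-hand side back on the $E^r$-page gives $(\pm1)(\rho)\,d^{(r)}(\alpha_1\cdots\alpha_k)$, and the induction closes; I would organize the bookkeeping around the iterated product, which is unambiguous by the associativity Lemma~\ref{l:assoc}.

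I expect the main obstacle to be exactly the compatibility invoked at the start of the last paragraph: that forming the $\pi_*(P)$-product first and then passing to the outer ($r$-shifted) diagram agrees with passing to the outer diagrams first and multiplying there. Unwinding this reduces to checking that the cubical-colimit / Fubini bookkeeping of Proposition~\ref{p:E_1algebra} is natural in the shift parameter $t$ — it is the one place where the identifications must be tracked rather than quoted. Everything else (well-definedness modulo $B^{r+1}$, and the precise sign) is routine once Propositions~\ref{p:E_1algebra} and~\ref{p:jk-der} and Lemma~\ref{l:assoc} are available.
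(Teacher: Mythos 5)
Your proposal is correct and follows essentially the same route as the paper: the base case is Propositions \ref{p:E_1algebra} and \ref{p:jk-der}, and the higher pages are handled by using Lemma \ref{l:cycles} to lift representatives through $W_{q_i+r}$, then applying those same propositions at shift $t=r$ to the ``outer'' squares, with the Remark after Lemma \ref{l:cycles} identifying $d^{(r)}$ as $jk$ of the outer diagram. The compatibility you flag (product of outer squares versus outer square of the product, via naturality of the cubical construction in the tower maps) is exactly the point the paper's two-line proof leaves implicit, so making it explicit is a faithful elaboration rather than a departure.
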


\begin{rem} \label{r:convergence} As in the algebraic case, one could consider whether or not the spectral sequence studied in this section converges as an operad algebra.  The results of subsection 3.2 can be translated into this setting with little change.  We define
\[ H_n = \underset{p}{colim} \ \pi_n(W_p) \]
and filter $H_n$ by $F_pH_n$, which is the image of $\pi_n(W_p)$ in $H_n$.  The conclusion of Theorem 3.6 holds in this context, and as the proof is not significantly different, we leave the study of convergence as an exercise for the curious reader.
\end{rem}

\section{Spectral sequences in the stable setting}\label{s:spectra}

The results of Section \ref{s:spaces} can be extended to the stable setting by considering spectral sequences of spectra with operad actions.  Suppose that we have a tower
\[ \cdots \to W_{m+1} \to W_m \to W_{m-1} \to \cdots \]
of spectra.  In order to define an action of an operad $\{P_n\}$ of spectra on the tower $W$, we need a well-behaved product.  This allows us to form
\[ P_k\wedge W_{i_1} \wedge \cdots \wedge W_{i_k}\]
so that we may generalize Definition \ref{d:toweraction} to spectra.  At a minimum, ``well-behaved" means that this product must be associative.  There are several equivalent categories of spectra whose construction includes a symmetric monoidal product $\wedge$,  that is, a product which is associative, commutative and unital up to isomorphism.  These categories include the ring spectra of \cite{EKMM} as well as symmetric spectra \cite{HSS}.  In this exposition, we will use the latter.  

Briefly, a symmetric spectrum ${\bf X}$ is a sequence of pointed spaces $\{ X_n\}_{n\geq 0}$ together with structure maps $\sigma: S^1\smash X_n \to X_{n+1}$ and a basepoint preserving action of the $n$-th symmetric group $\Sigma_n$ on $X_n$ which keeps track of iterated structure maps.  In particular, if $\Sigma_p$ acts on $S^p = (S^1)^{\wedge p}$ by permuting factors, the map
\[ \sigma^p = \sigma\circ(S^1\wedge \sigma) \circ \cdots \circ (S^{p-1}\wedge \sigma):S^p\wedge X_n \to X_{n+p}\]
is required to be $\Sigma_p\times \Sigma_n$-equivariant for $p\geq 1$ and $n\geq 0$.  The sphere spectrum ${\bf S}= \{ S^n\}_{n\geq 0}$ is an example of a symmetric spectrum.

The smash product of two symmetric spectra ${\bf X}$ and ${\bf Y}$ is the tensor product of ${\bf X}$ and ${\bf Y}$ as modules over the sphere spectrum, ${\bf S}$.  For any symmetric spectra ${\bf X}$ and ${\bf Y}$, let 
${\bf X}\otimes {\bf Y}$ be the symmetric sequence with 
\[ ( X\otimes  Y)_n = \bigvee (\Sigma_n)_+ \wedge_{\Sigma_p\times \Sigma_q} (X_p\wedge Y_q)\]
where the sum is taken over all $p+q=n$.
If ${\bf X}$ is any symmetric spectrum then there is a map 
\[ m:{\bf S}\otimes {\bf X} \to {\bf X}\]
given by the collection of $\Sigma_p\times \Sigma_q$-equivariant maps
\[ \sigma^p: S^p \wedge X_q \to X_{p+q}.\]
Equivalently, this produces a map ${\bf X}\otimes {\bf S} \to {\bf X}$.
Let ${\bf X}\otimes {\bf S} \otimes {\bf Y}$ be the symmetric sequence with
\[ (X\otimes S \otimes Y)_n = \bigvee (\Sigma_n)_+ \wedge_{\Sigma_p \times \Sigma_q \times \Sigma_r} (X_p\wedge S^q \wedge Y_r)\]
where the sum is taken over all $p+q+r=n$.   Then the smash product of ${\bf X}$ and ${\bf Y}$ is the colimit of
\[ \xymatrix{{\bf X}\otimes {\bf S} \otimes {\bf Y} \ar@<1ex>[r]^{m\otimes 1} \ar@<-1ex>[r]_{1\otimes m} & {\bf X}\otimes {\bf Y}}.\]
The smash product is a symmetric monoidal product of symmetric spectra whose unit is ${\bf S}$.  See \cite{HSS} for further details about symmetric spectra and their properties.

The correct notion of homotopy for spectra is the stable homotopy group defined to be
\[\pi_n^s({\bf X}) = \lim_{k} \pi_{n+k} (X_k)\]
where the limit is taken over the structure maps $\sigma$.
A map $f:{\bf X} \to {\bf Y}$ of symmetric spectra is given by a collection of $\Sigma_n$-equivariant maps $f_n:X_n \to Y_n$ which {\it strictly} commute with the structure maps $\sigma$.  If we have a commuting ladder of long exact sequences
\[ \xymatrix{ \cdots \ar[r]& \pi_k(X_n) \ar[r]^{f_*} \ar[d]^{\sigma} & \pi_k(Y_n)\ar[r] \ar[d]^{\sigma} & \pi_k(Y_n, X_n)\ar[r]\ar[d] 
& \dots \\
\dots \ar[r] & \pi_{k+1}(X_{n+1}) \ar[r]^{f_*} & \pi_{k+1}(Y_{n+1}) \ar[r] & \pi_{k+1}(Y_{n+1}, X_{n+1})\ar[r] 
& \dots}\]
for all $k$ and $n$, then we obtain a long exact sequence of stable homotopy groups associated to $f$ with 
\[ \pi_k^s({\bf Y}, {\bf X}) = \lim_n \pi_{k+n} ( Y_n, X_n).\]

As before, we suppose that we have a tower 
\[ \xymatrix{ \cdots \ar[r] &{\bf W_{m+1}}\ar[r]^{q_{m+1}} & {\bf W_m} \ar[r]^{q_m\quad} & {\bf W_{m-1}} \ar[r] & \cdots}\]
where each ${\bf W_i}$ is now a symmetric spectrum rather than a pointed space.    The homotopy fiber of $f_n:X_n \to Y_n$ is  the (strict) pullback:
\[ \xymatrix{ F_n\ar@{-->}[r] \ar@{-->}[d]_{d_n} & PY_n\ar[d]^{ev_1} \\
X_n \ar[r]^{f_n} & Y_n}\]
where $PY_n$ is the based path space of $Y_n$ and $ev_1$ is the evaluation of the path $\gamma:I\to Y_n$ at $t=1$ (we take $t=0$ to be the basepoint of $I$).
The pullback is taken in the category of $\Sigma_n$-equivariant basepointed spaces, so that $F_n$ naturally obtains a basepoint-preserving action of $\Sigma_n$.
Taking the homotopy fiber of each $f_n$ produces a spectrum ${\bf F}=\{ F_n\}$ whose structure maps $\sigma_F$ are constructed via the universal property of the pullback.  Let $eval:\Sigma PY_n\to P(\Sigma Y_n)$ be the map taking $s\wedge \gamma\in S^1\wedge PY_n$ to the path $s\wedge \gamma$ in $\Sigma Y_n$.  The the outer square of the diagram
\[ \xymatrix{
\Sigma F_n \ar[r] \ar@{-->}[dr]^{\sigma_F} \ar[dd]_{\Sigma d_n} &\Sigma PY_n \ar[r]^{eval} & P(\Sigma Y_n) \ar[d]^{P(\sigma_Y)} \\
 & F_{n+1} \ar[r] \ar[d]_{d_{n+1}} & PY_{n+1} \ar[d]^{ev_1}\\
 \Sigma X_n \ar[r]^{\sigma_X} &X_{n+1} \ar[r]^{f_{n+1}} & Y_{n+1} }\]
is seen to commute after a short diagram chase, ensuring the existence of $\sigma_F$.  The $\Sigma_n$-equivariance of the maps $\sigma_X$ and $d_n$, plus the $\Sigma_{n+1}$-equivariance of $d_{n+1}$ ensures that $\sigma_F$ is $\Sigma_n$-equivariant (or $\Sigma_1\times \Sigma_n$-equivariant, if you prefer).  Similarly, the maps $\sigma^r_F:\Sigma^rF_n\to F_{n+r}$ are seen to be $\Sigma_r\times \Sigma_n$-equivariant because of the equivariance of the maps $\Sigma^rd_n$, $\sigma^r_X$ and $d_{n+r}$ in the commuting diagram
\[ \xymatrix{
\Sigma^rF_n \ar[r]^{\sigma^r_F} \ar[d]_{\Sigma^r d_n} & F_{n+r} \ar[d]^{d_{n+r}} \\ \Sigma^rX_n \ar[r]^{\sigma^r_X} & X_{n+r}}.\]   We obtain isomorphisms $\pi_k^s({\bf Y}, {\bf X}) \cong \pi_k ({\bf F})$ in the same way we did for spaces.

Finally, we require the operad action of ${\bf P}$ on the tower ${\bf W_*}$ to be strict; that is, the structure maps should produce strictly commuting diagrams.

\begin{theorem}\label{t:spectra} If ${\bf P}$ acts on a tower of fibrations ${\bf W_*}$ of spectra, then the associated spectral sequence is  a sequence of $\pi_*({\bf P})$-algebras.
\end{theorem}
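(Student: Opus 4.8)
The plan is to reduce Theorem \ref{t:spectra} to the space-level results of Section \ref{s:spaces} by working one level of the symmetric spectra at a time and then passing to the (co)limit that defines the stable homotopy groups. Recall that the exact couple of diagram (\ref{e:extcpl}) is now built from the groups $\pi^s_*(\mathbf{W},\mathbf{W}_{+t})$ and $\pi^s_*(\mathbf{W})$, which are the (co)limits over the spectrum levels $N$ of the relative space-level homotopy groups $\pi_{*+N}\big((\mathbf{W}_m)_N,(\mathbf{W}_{m+t})_N\big)$ and the absolute groups $\pi_{*+N}\big((\mathbf{W}_m)_N\big)$; likewise $\pi^s_*(\mathbf{P}(k))$ is $\lim_N \pi_{*+N}\big((\mathbf{P}(k))_N\big)$, and, as in Proposition \ref{p:E_1algebra}, $\pi^s_*(\mathbf{P})$ is an operad of graded groups, here via the natural smash-product pairings on stable homotopy. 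Unwinding the definition of the symmetric-spectra smash product, the strict action map $\gamma\colon\mathbf{P}(k)\wedge\mathbf{W}_{i_1}\wedge\cdots\wedge\mathbf{W}_{i_k}\to\mathbf{W}_i$ is assembled from level maps $(\mathbf{P}(k))_{N_0}\wedge(\mathbf{W}_{i_1})_{N_1}\wedge\cdots\wedge(\mathbf{W}_{i_k})_{N_k}\to(\mathbf{W}_i)_{N_0+\cdots+N_k}$ which are $(\Sigma_{N_0}\times\cdots\times\Sigma_{N_k})$-equivariant and which strictly commute with both the structure maps $\sigma$ of the spectra and the tower maps $q$.

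First I would fix these levels and carry out, essentially verbatim, the constructions used to prove Propositions \ref{p:E_1algebra} and \ref{p:jk-der}: a representative of a stable class $\rho\in\pi^s_*(\mathbf{P}(k))$ together with representatives of stable relative classes $\alpha_1,\dots,\alpha_k$, fed through the level action map and the disk--sphere decompositions of Definition \ref{d:cubical}, produce an element of a relative homotopy group of the pair of spaces at level $N_0+\cdots+N_k$, exactly as in the proof of Proposition \ref{p:E_1algebra}. Lemma \ref{l:assoc} then shows this product is associative at each level, and Proposition \ref{p:jk-der} shows that the level differential $jk$ is a derivation for it. The next step is to verify that these level products are compatible with the stabilization maps $\sigma^r$ --- this is exactly where the strict commutativity of the operad action with the structure maps, together with the symmetric-group equivariance encoded in the construction of the fiber structure maps $\sigma_F$ carried out above, comes in --- so that the products pass to well-defined, equivariant, associative maps $\pi^s_*(\mathbf{P}(k))\otimes\pi^s_*(\mathbf{W},\mathbf{W}_{+t})^{\otimes k}\to\pi^s_*(\mathbf{W},\mathbf{W}_{+t})$ for which $jk$ remains a derivation. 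This establishes the stable analogues of Propositions \ref{p:E_1algebra} and \ref{p:jk-der}.

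With those in hand, the rest of the proof is identical to that of the main theorem of Section \ref{s:spaces}. Each $\alpha_i\in E^r$ lies in $Z^r$, so by the stable version of Lemma \ref{l:cycles} it is represented by a square diagram of spectra admitting a lift through $\mathbf{W}_{q_i+r}$; applying the stable form of Proposition \ref{p:jk-der} to the outer ``square'' diagrams shows that each $d^{(r)}$ obeys the Leibniz rule, and an induction on $r$ (using Lemma \ref{l:action1} in the form appropriate to operads of graded groups) shows that every page $E^r$ carries a $\pi_*(\mathbf{P})$-action compatible with the differentials. Hence the associated spectral sequence is a sequence of $\pi_*(\mathbf{P})$-algebras.

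I expect the main obstacle to be precisely this compatibility of the level-wise constructions with the structure maps $\sigma$ and the attendant equivariance. Because the symmetric-spectra smash product is itself a colimit over partitions of each level, one must check that the auxiliary choices --- the homeomorphisms $D^n\cong D^{n_0}\wedge\cdots\wedge D^{n_k}$ and the assembly maps out of the cubical diagrams of Definition \ref{d:cubical} --- can be made coherently as the levels grow, so that the resulting maps genuinely stabilize rather than merely existing level by level. Once this bookkeeping is dispatched, no geometric input beyond what is already present in Section \ref{s:spaces} is required.
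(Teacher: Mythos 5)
Your proposal is correct and takes essentially the same approach as the paper: the paper's proof consists of the observation that, with the symmetric-spectra setup already in place (strict operad action, level-wise smash product, fiber spectra with equivariant structure maps $\sigma_F$, and stable relative homotopy groups defined as limits of the level-wise groups), the argument of Section \ref{s:spaces} carries over verbatim. Your level-by-level bookkeeping and passage to the limit simply makes explicit the compatibility checks that the paper leaves implicit in the phrase ``exactly as given for spaces.''
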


Under these assumptions, the proof of this theorem is exactly as  given for spaces in Section \ref{s:spaces}.   We note, in particular, that this theorem implies that multiplicative towers of spectra with strictly commuting multiplicative structure whose multiplication is only ( for e.g) homotopy associative produce homotopy associative multiplicative spectral sequences,  since an operad will classify the associative multiplication up to homotopy.

We also note that the convergence statements of subsection 3.2 and remark \ref{r:convergence}  have analogues in the stable setting.  We leave the formulation and proofs of these statements as an exercise for the reader.

\subsection{Examples}  Given a spectrum ${\bf E}$, the generalized homology of the spectrum ${\bf X}$ with coefficients in ${\bf E}$ is defined by 
\[ {\bf E}_n({\bf X}) = \pi_n({\bf E\wedge X}).\]  When ${\bf E}$ is the Eilenberg-MacLane spectrum ${\bf H{\mathbb Z}/p}$, the resulting homology theory is the mod $p$ homology of the spectrum ${\bf X}$, which is a homology theory of much study (see, e.g. \cite {A} or \cite{R}).
If ${\bf E}$ is also a commutative and associative ring spectrum, then an operad action on ${\bf X}$ passes to an operad action on ${\bf E\wedge X}$ via the maps
\[ \xymatrix{ \bf{P(k)} \wedge ({\bf E\wedge X})  \wedge \cdots \wedge ({\bf E\wedge X}) \ar[r] & 
{\bf E}^{\wedge k} \wedge ({\bf P(k)} \wedge  {\bf X}^{\wedge k}) \ar [r]^{\quad\quad \mu \wedge \rho}&
{\bf E\wedge X}} \] 
where the map $\mu$ is the multiplication map for the ring spectrum ${\bf E}$ and $\rho$ is the structure map for the operad algebra multiplication for ${\bf X}$.

 A particularly interesting case is given by the Goodwillie tower of fibrations associated to the functor $\Sigma^{\infty}Maps(K, X)_+$ as a functor of $X$ and for a fixed finite complex $K$.  The tower of fibrations
 \[ \xymatrix{ && \cdots \ar[d] \\ 
 && P_2^K(X) \ar[d] \\
 && P_1^K(X) \ar[d] \\
 \Sigma^{\infty}Maps(K,X)_+ \ar[rr] \ar[urr]\ar[uurr] && P_0^K(X) } \]
 was computed by G. Arone \cite{Ar}.  Later, S. Ahearn and N. Kuhn \cite{AK} studied the structure of spectral sequences associated to this tower for various values of $K$.  In particular, they showed that the little cubes operad ${\mathcal C}_n$ acts on the tower $P_*^{S^n}(X)$, where  ${\mathcal C}_n$ 
 is the operad whose $k$-th space ${\mathcal C}_n(k)$ consists of an ordered collection of $k$ little $n$-cubes linearly embedded into the standard $n$-cube with disjoint interiors and axes parallel to those of the standard $n$-cube;  this operad is the standard one that recognizes an $ n$-fold loop space, \cite{M, MSS}.     

Now if 
 W is the standard  ${\Bbb Z}[{\Bbb Z}_p]$-free resolution, there is a ${\Bbb Z}_p$-equivariant map $W^{(n)}_* \to C_* {\mathcal C}_{n+1}(p)$.   Then on the singular chains $K = C_*(P_*^{S^n}(X))$ of the tower, we have an induced operad action of $W$,  given by the ${\Bbb Z}_p$-equivariant map $\theta:  W^{(n)}_*  \otimes K^{\otimes p}  \to K$  coming from the action of the little cubes operad.  This operad action induces  the Dyer-Lashof operations  for $p$ odd prime\cite{M}. 
Applying generalized mod-p homology ${\bf H_*} = \pi_*({\bf H{\mathbb Z}/p}\wedge - )$
and 
Theorem \ref{t:spectra}, we obtain a spectral sequence of algebras over 
${\bf H}_*({\mathcal C}_n)$.  Since the Dyer-Lashof operations are induced by the action of the little cubes operad, this explains the authors' note that ``Computationally, this implies that the associated spectral sequences for computing mod $p$ homology admit Dyer-Lashof operations." \cite{AK}.




\begin{thebibliography}{1}

\bibitem{A} J. F. Adams, {\em Stable Homotopy and Generalised Homology}, Chicago Lectures in Mathematics, The University of Chicago Press, Chicago, IL, 1974.

\bibitem{AK} S. Ahearn and N. Kuhn, {\em Product and other fine structure in polynomial resolutions of mapping spaces}, Algebraic \& Geometric Topology, {\bf 2} (2002), 591 -- 647.

\bibitem{Ar} G. Arone, {\em A generalization of Snaith-type filtration}, Trans. A.M.S. {\bf 351} (1999), 1123 -- 1250.

\bibitem{EKMM} A. Elmendorff, I. Kriz, M. Mandell and P. May, {\em Rings, Modules and Algebras in Stable Homotopy Theory.}  With an appendix by M. Cole.  Mathematical Surveys and Monographs, {\bf 47}.  {\em American Mathematical Society}, Providence, RI, 1997.

\bibitem{F}
B. Fresse,
\newblock Koszul duality of operads and homology of partition posets,
\newblock {\em Homotopy theory: relations with algebraic geometry, group cohomology, and algebraic $K$-theory},  115--215, {\em Contemp. Math.}, 346, Amer. Math. Soc., Providence, RI, 2004.

\bibitem{D}
D. Dugger,
\newblock{\em Multiplicative structures on homotopy spectral sequences, Part I}, preprint.

\bibitem{HSS} M. Hovey, B. Shipley and J. Smith, {\em Symmetric spectra}, Jour. A.M.S. (13) {\bf 1}, (1999), 149 -- 208.

\bibitem{MacLane} S. MacLane, {\em Categories for the working mathematician}, Graduate Texts in Mathematics, 5.  Springer, New York, 1978.

\bibitem{MSS} M. Markl, S. Schnider and J. Stasheff,
\newblock{\em Operads in Algebra, Topology and Physics}, Mathematical Surveys and Monographs, 96. American Mathematical Society, 2002.


\bibitem{markl}
M. Markl, \newblock{\em Operads and PROPs}.
\newblock  Handbook of Algebra 5, 87-140, Elsevier, 2008.

\bibitem{Mas}
W. S. Massey, 
\newblock {\em Products in exact couples}, Ann. of Math. (3) {\bf 59}, (1954). 558-569.

\bibitem{M}
P. May,
\newblock {\em Geometry of Iterated Loop Spaces },  Springer-Verlag 1972.  

\bibitem{McC}
J. McCleary, \newblock{\em User's guide to spectral sequences}, Publish or Perish, Willmington, DE, 1985.


\bibitem{R} D. Ravenel, {\em Complex Cobordism and Stable Homotopy Groups of Spheres}, second edition, AMS Chelsea Publishing, Providence, RI, 2004.

\bibitem{W}
C. Weibel,
\newblock{\em An introduction to homological algebra}.
\newblock Cambridge Studies in Advanced Mathematics, 38. Cambridge University Press, Cambridge, 1994.

\end{thebibliography}
\end{document}